\newtheorem{theorem}{Theorem}[section]
\newtheorem{proposition}{Proposition}[section]
\newtheorem{lemma}{Lemma}[section]
\theoremstyle{definition}
\numberwithin{equation}{section}
\numberwithin{theorem}{section}
\numberwithin{lemma}{section}
\numberwithin{corollary}{section}
\numberwithin{proposition}{section}
\newcommand{\wC}{\widetilde{C}}
\newcommand{\wD}{\widetilde{D}}
\newcommand{\wP}{\widetilde{P}}
\newcommand{\wV}{\widetilde{V}}
\newcommand{\ds}{\displaystyle}
\newcommand{\N}{\mathbb{N}}
\begin{document}

\title[On a New Modification of Baskakov Operators]
      {On a New Modification of Baskakov Operators \\ with Higher Order of Approximation}

\author[I. Gadjev]{Ivan Gadjev$^1$}
\address{$^1$\,Faculty of Mathematics and Informatics, Sofia University St. Kliment Ohridski, \newline
         \indent 5, J. Bourchier Blvd, 1164 Sofia, Bulgaria}
\email{gadjev@fmi.uni-sofia.bg}

\author[P. Parvanov]{Parvan Parvanov$^2$}
\address{$^2$\,Faculty of Mathematics and Informatics, Sofia University St. Kliment Ohridski, \newline
         \indent 5, J. Bourchier Blvd, 1164 Sofia, Bulgaria}
\email{pparvan@fmi.uni-sofia.bg}

\author[R. Uluchev]{Rumen Uluchev$^3$}
\address{$^3$\,Faculty of Mathematics and Informatics, Sofia University St. Kliment Ohridski, \newline
         \indent 5, J. Bourchier Blvd, 1164 Sofia, Bulgaria}
\email{rumenu@fmi.uni-sofia.bg}

\thanks{$^{1,2,3}$\,This study is financed by the European Union-NextGenerationEU, through the National
        Recovery and Resilience Plan of the Republic of Bulgaria, project No BG-RRP-2.004-0008.}

\keywords{Baskakov-Durrmeyer operator, Goodman-Sharma operator, Direct theorem,
          Strong converse theorem, K-functional.}

\subjclass{41A35, 41A10, 41A25, 41A27, 41A17.}

\date{}

\begin{abstract}
  A new Goodman-Sharma modification of the Baskakov operator is presented for approximation of
  bounded and continuous on $[0,\,\infty)$ functions. In our study on the approximation error of
  the proposed operator we prove direct and strong converse theorems with respect to a related
  K-functional. This operator is linear but not positive. However it has the advantage of a higher
  order of approximation compared to the Goodman-Sharma variant of the Baskakov operator defined
  in 2005 by Finta.
\end{abstract}

\maketitle

%====================================================================================================

\bigskip
\section{Introduction}

In 1957, Baskakov \cite{Ba1957} suggested the linear positive operators
\begin{equation} \label{eq:1.1}
  B_n(f,x) = \sum_{k=0}^{\infty} \,f\Big(\frac{k}{n}\Big)P_{n,k}(x)
\end{equation}
for approximation of bounded and continuous on $[0,\,\infty)$ functions $f$, where
$$
  P_{n,k}(x) = \binom{n+k-1}{k}x^k(1+x)^{-n-k}, \qquad k=0,1,\ldots,
$$
can be considered as Baskakov basis functions.

Following the Goodman and Sharma modification \cite{GoSh1988} of the Bernstein polynomials,
Finta~\cite{Fi2005} introduced a variant of the operators $B_n$ for functions $f$ which are Lebesgue
measurable on $(0,\infty)$ with a finite limit $f(0)$ as $x\to 0$:
\begin{equation} \label{eq:1.2}
  \begin{gathered}
    V_n(f,x) = \sum _{k=0}^{\infty } \,v_{n,k}(f)P_{n,k}(x), \\
    v_{n,0}(f) = f(0), \qquad v_{n,k}(f) = (n+1)\int_0^{\infty} P_{n+2,k-1}(t)f(t)\,dt, \ \ k\in\N,
  \end{gathered}
\end{equation}
Finta proved a strong converse result of Type B (in the terminology of \cite{DiIv1993}) for $V_n$.
The research on the operators \eqref{eq:1.2} were continued in \cite{Fi2006, GuAg2007, GuNoBeGu2006}.

Later Ivanov and Parvanov \cite{IvPa2011} investigated the uniform weighted approximation error
of the Baskakov-type operators $V_n$ for weights of the form
$\big(\frac{x}{1+x}\big)^{\gamma_0}(1+x)^{\gamma_{\infty}}$, $\gamma_0,\gamma_{\infty}\in[-1,0]$,
by establishing direct and strong converse theorems in terms of the weighted K-functional.

Recently Jabbar and Hassan \cite{JaHa2024} studied a family of Baskakov-type operators, where the
Baskakov basis functions $P_{n,k}$ in $B_n f$ are replaced by linear combinations of Baskakov
basis functions of lower degree with coefficients being polynomials of appropriate degree.
The benefit is obtaining a better order of approximation than the classical Baskakov operator.
Jabar and Hassan proved certain direct results on the approximation rate of these operators and
attached results of numerical experiments.

The ideas presented in \cite{JaHa2024} suggested the authors of the current paper to define and
explore new operators with higher order of approximation.

As usual, $D f(x)=\frac{df(x)}{dx}=f'(x)$, $D^2 f(x)=\frac{d^2 f(x)}{dx^2}=f''(x)$, and we denote
by
$$
  \psi(x) = x(1+x)
$$
the weight function which is naturally related to the second order differential operator for the
Baskakov type operators. Also, we set
\begin{equation} \label{eq:1.3}
  \wD f(x) := \psi(x)f''(x),
\end{equation}
and
$$
  \wD^2 f := \wD\wD f, \qquad \wD^{r+1} f := \wD \wD^r f, \qquad r=2,3\ldots\,.
$$

Our study is on the operators explicitly defined by
\begin{equation} \label{eq:1.4}
  \wV_n(f,x) = \sum_{k=0}^{\infty} v_{n,k}(f)\wP_{n,k}(x), \qquad x\in[0,\infty),
\end{equation}
with basis functions
\begin{equation} \label{eq:1.5}
    \wP_{n,k}(x) = P_{n,k}(x) - \frac{1}{n}\,\wD P_{n,k}(x)
\end{equation}

The operators $\wV_n$ relate to operators $V_n$ in the same manner as some operators in
\cite{JaHa2024} relate to the classical Baskakov operators \eqref{eq:1.1}.

By $C[0,\infty)$ we denote the space of all continuous on $[0,\infty)$ functions, and
$L_{\infty}[0,\infty)$ stands for the space of all Lebesgue measurable and essentially bounded on
$[0,\infty)$ functions equipped with the uniform norm $\|\cdot\|$.
Let us set
$$
  W^2(\psi) = \big\{g\,:\,g,g'\in AC_{loc}(0,\infty),\ \wD g \in L_\infty[0,\infty)\big\},
$$
where $AC_{loc}(0,\infty)$ consists of the functions which are absolutely continuous on $[a,b]$ for
every $[a,b]\subset (0,\infty)$.

By $W^2_0(\psi)$ we denote the subspace of $W^2(\psi)$ of functions $g$ satisfying the additional
boundary condition
$$
  \lim_{x\to 0^+} \wD g(x) = 0.
$$

For functions $f\in C[0,\infty)$ and $t>0$ we define the K-functional
\begin{equation} \label{eq:1.6}
  K(f,t) = \inf \big\{ \| f-g \| + t \| \wD ^2g\| \,:\, g\in W^2_0(\psi),\ \wD g\in W^2(\psi)\big\}.
\end{equation}

Below we investigate the error rate for functions $f\in C[0,\infty)$ approximated by the
Goodman-Sharma modification of the Baskakov operator \eqref{eq:1.4}. Direct and strong converse
theorems are proved in means of the above K-functional $K(f,t)$ and we summarize our main results
in the following statements.

\begin{theorem} \label{th:1.1}
  Let $\,n\ge 2$, $n\in\mathbb{N}$. Then for all $f\in C[0,\infty)$, there holds
  $$
    \big\|\wV_n f-f\big\| \le 3K\Big(f,\frac{1}{n^2}\Big).
  $$
\end{theorem}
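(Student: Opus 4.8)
The plan is to run the standard K-functional argument, but adapted to the non-positive operator $\wV_n$ through two quantitative estimates. The starting observation is the operator identity
$$
  \wV_n f = V_n f - \tfrac1n\,\wD V_n f = \big(I - \tfrac1n\wD\big)V_n f ,
$$
which is immediate from \eqref{eq:1.4}--\eqref{eq:1.5}, since the coefficients $v_{n,k}(f)$ are independent of $x$ and $\wD$ acts only on the basis functions $P_{n,k}$. For any $g$ admissible in \eqref{eq:1.6} I would write
$$
  \wV_n f - f = \wV_n(f-g) - (f-g) + \big(\wV_n g - g\big),
$$
so that everything reduces to (a) a stability bound $\|\wV_n h\|\le 2\|h\|$ and (b) a second-order bound $\|\wV_n g - g\|\le \tfrac{3}{n^2}\|\wD^2 g\|$.

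For (a) note that $\int_0^\infty (n+1)P_{n+2,k-1}(t)\,dt = 1$, so $|v_{n,k}(h)|\le\|h\|$ for every $k$; hence $|\wV_n h(x)|\le\|h\|\sum_k|\wP_{n,k}(x)|$ and it suffices to prove $\sum_k|\wP_{n,k}(x)|\le 2$. Since $\sum_k P_{n,k}\equiv 1$ we have $\sum_k\wD P_{n,k}=\wD 1=0$, and the bound reduces to the pointwise inequality $\tfrac1n\sum_k|\wD P_{n,k}(x)|\le 1$. This I would establish by computing $\wD P_{n,k}$ from $\psi P_{n,k}'=(k-nx)P_{n,k}$, so that each $\wD P_{n,k}$ is an explicit polynomial multiple of $P_{n,k}$, and then summing with the partition-of-unity property and the first Baskakov moments. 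Granting (a), the first two terms above contribute at most $3\|f-g\|$.

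Estimate (b) is where the modification does its work. A short moment computation gives $v_{n,k}$ of the identity function equal to $k/n$, so $V_n$ reproduces constants and linear functions, and therefore so does $\wV_n$ (as $\wD$ kills linear functions); thus $\wV_n g - g$ annihilates the kernel of $\wD^2$, which makes a bound by $\|\wD^2 g\|$ structurally possible. Writing $E:=V_n g - g$ and using the identity above,
$$
  \wV_n g - g = \Big(E - \tfrac1n\wD g\Big) - \tfrac1n\,\wD E .
$$
Feeding the Taylor representation $g(t)-g(x)-g'(x)(t-x)=\int_x^t (t-u)\psi(u)^{-1}\wD g(u)\,du$ into $V_n$ and using reproduction of linears yields an exact integral formula for $E$ whose leading part is exactly $\tfrac1n\wD g$; hence the bracket $E-\tfrac1n\wD g$ is of order $n^{-2}$, and a second use of the same representation (legitimate since $\wD g\in W^2(\psi)$) expresses it through $\wD^2 g$ with an explicit $n^{-2}$ factor.

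The main obstacle is the remaining term $\tfrac1n\wD E=\tfrac1n\psi\,(V_n g - g)''$: controlling it requires a genuine second-derivative estimate for the base error, of the form $\|\wD E\|\le \tfrac{C}{n}\|\wD^2 g\|$, i.e.\ one must commute $\wD$ past $V_n$ and bound the resulting kernel. I expect this to be the only place where soft arguments do not suffice and where the precise Baskakov moment identities (the images of $t$, $t^2$ and of $\psi$) together with careful grouping of the series are needed to keep the constant at most $3$. Once (a) and (b) are proved, the triangle inequality gives
$$
  \|\wV_n f - f\|\le 3\|f-g\| + \frac{3}{n^2}\,\|\wD^2 g\|
  = 3\Big(\|f-g\| + \frac{1}{n^2}\,\|\wD^2 g\|\Big),
$$
and taking the infimum over all admissible $g$ yields $\|\wV_n f - f\|\le 3K(f,1/n^2)$.
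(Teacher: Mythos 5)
Your overall skeleton is exactly the paper's: a uniform boundedness estimate $\|\wV_n h\|\le 2\|h\|$, a Jackson-type bound $\|\wV_n g-g\|\le \tfrac{c}{n^2}\|\wD^2 g\|$, the three-term triangle inequality and the infimum over $g$. But both load-bearing estimates are left with genuine gaps. The more serious one is in (a). You split $|\wP_{n,k}|\le P_{n,k}+\tfrac1n|\wD P_{n,k}|$ and reduce to the pointwise claim $\tfrac1n\sum_k|\wD P_{n,k}(x)|\le 1$. First, this claim cannot be reached by "summing with the partition-of-unity property and the first Baskakov moments": moment identities control signed sums, not sums of absolute values. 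The natural tool, Cauchy--Schwarz, gives via $\wD P_{n,k}=T_{n,k}P_{n,k}$ and the identity $\sum_k T_{n,k}^2P_{n,k}=(2+\tfrac2n)n^2$ only $\tfrac1n\sum_k|T_{n,k}|P_{n,k}\le\sqrt{2+\tfrac2n}$, which is $\sqrt3$ at $n=2$ and exceeds $1$ for every $n$; so your reduction would yield at best $\sum_k|\wP_{n,k}|\le 1+\sqrt{2+\tfrac2n}\approx 2.73$, destroying the constant $2$ and hence the constant $3$ in the theorem. The paper avoids this loss by \emph{not} separating the two terms: it applies Cauchy--Schwarz directly to $\sum_k|1-\tfrac1nT_{n,k}(x)|P_{n,k}(x)$ and uses the exact evaluation $\Phi(1)=3+\tfrac2n\le 4$ (Proposition~\ref{pr:2.3}(b)), where the cross term vanishes because $\sum_kT_{n,k}P_{n,k}=0$. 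You need that combined estimate, not the termwise one.

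In (b) you correctly identify the crux --- controlling $\tfrac1n\wD(V_ng-g)$ --- but leave it unproved, describing it as a kernel computation still to be done. The missing ingredient is the commutation relation $\wD V_n g=V_n\wD g$ for $g\in W^2_0(\psi)$ (Proposition~\ref{pr:2.2}(a), quoted from Ivanov--Parvanov), after which $\|\wD(V_ng-g)\|=\|V_n\wD g-\wD g\|\le\tfrac1n\|\wD^2g\|$ follows from \eqref{eq:2.8} applied to $\wD g$; this is not something your Taylor representation produces on its own. The paper in fact bypasses the whole two-term decomposition by the telescoping identity $\wV_kf-\wV_{k+1}f=-\tfrac1{k(k+1)^2}\wD V_k\wD f$, which gives the cleaner constant $\lambda(n)\le\tfrac1{n^2}$ at once. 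Your target constant $\tfrac3{n^2}$ would still suffice for the final bound $3K(f,\tfrac1{n^2})$, but as written neither (a) with constant $2$ nor (b) is actually established, and the route you sketch for (a) cannot deliver the constant you need.
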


\begin{theorem} \label{th:1.2}
  There exist constants $C,L>0$ such that for all $f\in C[0,\infty)$ and $\ell,n\in \mathbb{N}$
  with  $\ell\ge Ln$ there holds
  $$
    K\Big(f,\frac{1}{n^2}\Big) \le
    C\,\frac{\ell^2}{n^2}\,\big(\big\|\wV_n f - f \big\| + \big\|\wV_{\ell} f - f \big\| \big).
  $$
  In particular,
  $$
    K\Big(f,\frac{1}{n^2}\Big) \le
    C\,\big(\big\|\wV_n f - f \big\| + \big\|\wV_{Ln} f - f \big\| \big).
  $$
  The constants $C$ are independent of the function $f$, $\ell$ and $n$.
\end{theorem}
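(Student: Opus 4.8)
The plan is to bound $K(f,1/n^2)$ from above by inserting the comparison function $g=\wV_\ell f$ into the infimum \eqref{eq:1.6}. First I would check that $\wV_\ell f$ is admissible, i.e. that $\wV_\ell f\in W^2_0(\psi)$ and $\wD\wV_\ell f\in W^2(\psi)$; since $\wV_\ell f$ is a locally uniformly convergent series in the smooth basis functions $\wP_{\ell,k}$ of \eqref{eq:1.5}, this reduces to the boundary behaviour $\lim_{x\to0^+}\wD\wV_\ell f(x)=0$, which I expect to read off from the structure of $P_{\ell,k}$ near the origin. Granting admissibility, the choice $g=\wV_\ell f$ yields
$$
  K\Big(f,\tfrac1{n^2}\Big)\le \|f-\wV_\ell f\|+\tfrac1{n^2}\,\|\wD^2\wV_\ell f\|,
$$
so the whole theorem reduces to the single Bernstein-type estimate
$$
  \|\wD^2\wV_\ell f\|\le C\,\ell^2\,\big(\|\wV_n f-f\|+\|\wV_\ell f-f\|\big).
$$
Once this estimate holds, the term $\|f-\wV_\ell f\|=\|\wV_\ell f-f\|$ is absorbed because $\ell\ge n$ forces $\ell^2/n^2\ge1$, and the ``in particular'' assertion is just the case $\ell=Ln$, where $\ell^2/n^2=L^2$ is a constant folded into $C$.

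To prove the displayed estimate I would assemble three properties of the family $\wV_m$, each to be set up in preparatory lemmas: (a) uniform boundedness $\|\wV_m h\|\le C_0\|h\|$ on $C[0,\infty)$; (b) a genuine Bernstein-type inequality $\|\wD^2\wV_m h\|\le C\,m^2\|h\|$; and (c) a Voronovskaya-type expansion $\wV_m g-g=\tfrac{c}{m^2}\,\wD^2 g+R_m(g)$ for sufficiently smooth $g$, with a quantitatively controlled remainder $R_m$. Property (c) is the structural payoff of the modification \eqref{eq:1.5}: because the correction $-\tfrac1m\wD P_{m,k}$ is designed precisely to annihilate the first-order Voronovskaya term of $V_m$, the leading term of $\wV_m g-g$ is already of order $m^{-2}$ and carries the operator $\wD^2$ that appears in the K-functional \eqref{eq:1.6}.

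The core step is the decomposition
$$
  \wD^2\wV_\ell f=\wD^2\wV_\ell(f-\wV_n f)+\wD^2\wV_\ell\wV_n f,
$$
in which the first summand is handled immediately by the Bernstein inequality (b), giving $\|\wD^2\wV_\ell(f-\wV_n f)\|\le C\ell^2\|\wV_n f-f\|$ and accounting for the factor $\ell^2$ and the term $\|\wV_n f-f\|$. For the second summand I would exploit that $\wV_n f$ is smooth, inserting the Voronovskaya expansion (c) to rewrite $\wD^2\wV_\ell\wV_n f$ in terms of $\wD^2\wV_n f$ together with higher $\wD$-derivatives weighted by $\ell^{-2}$, and then re-expressing the difference $\wV_n(\wV_\ell f)-\wV_\ell f$ through the two errors via the identity $\wV_n(\wV_\ell f)-\wV_\ell f=\wV_n(\wV_\ell f-f)+(\wV_n f-f)-(\wV_\ell f-f)$ and the boundedness (a).

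The main obstacle is exactly the control of the remainder terms arising from (c) in the second summand, and this is where the separation of scales enters: because the two indices $n$ and $\ell$ satisfy $\ell\ge Ln$ with $L$ chosen sufficiently large, the cross-scale corrections carry a gain of the form $(n/\ell)^2\le L^{-2}$, which is what renders them harmless and forces both $\|\wV_n f-f\|$ and $\|\wV_\ell f-f\|$ to appear. This is the characteristic mechanism of a strong converse inequality of Type B in the sense of \cite{DiIv1993}, and I anticipate that essentially all of the technical work lies in making the remainder $R_m$ explicit enough that its $\wD$-derivative content is reconverted by the Bernstein inequality (b) into precisely the saving factor $L^{-2}$; choosing $L$ so that this factor lies below the threshold needed for absorption then closes the estimate and completes the proof.
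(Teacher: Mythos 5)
Your overall strategy (insert a smoothed version of $f$ into the K-functional, then combine a Voronovskaya expansion with a Bernstein inequality and uniform boundedness, using $\ell\ge Ln$ to absorb remainders) is the right Ditzian--Ivanov template, but as written it rests on a higher-order Bernstein inequality that you have not proved and that does not follow from the first-order one. The paper's Lemma~4.2 gives only $\|\wD\wV_m h\|\le\wC m\|h\|$ for bounded $h$; your item (b), $\|\wD^2\wV_m h\|\le Cm^2\|h\|$ for arbitrary $h\in C[0,\infty)$, is a genuinely new estimate requiring a bound on $\sum_k|\wD^2\wP_{m,k}(x)|$, and it cannot be obtained by applying Lemma~4.2 twice, because $\wD\wV_m h$ is not of the form $\wV_m(\cdot)$ for rough $h$ (the commutation in Proposition~2.2(c) needs the argument to lie in $W^2_0(\psi)$). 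The same problem recurs one order higher in your treatment of the second summand: controlling the Voronovskaya remainder requires a bound on $\|\wD^3(\cdot)\|$ of your comparison function, and with only a single factor of $\wV_\ell$ or $\wV_n$ available there is no way to reduce this to first-order Bernstein estimates. This is exactly why the paper takes the comparison function to be $\wV_n^3 f$ rather than $\wV_\ell f$: with three factors of $\wV_n$ one peels off the $\wD$'s one at a time, each time commuting $\wD$ past an inner factor (whose argument is by then smooth) and applying the first-order Lemma~4.2 to the outermost factor, e.g. $\|\wD^3\wV_n^3 f\|\le\wC n\|\wD^2\wV_n^2 f\|$ and then $\|\wD^2\wV_n^2(f-\wV_n f)\|\le\wC^2 n^2\|f-\wV_n f\|$.

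A second, related defect is that your plan for $\wD^2\wV_\ell\wV_n f$ does not close as described. The Voronovskaya expansion of $\wV_\ell$ applied to the smooth function $\wV_n f$ yields information about $\wD^2\wV_n f$, namely $\lambda(\ell)\|\wD^2\wV_n f\|\le\|\wV_\ell\wV_n f-\wV_n f\|+\theta(\ell)\|\wD^3\wV_n f\|$, not about $\wD^2\wV_\ell\wV_n f$; relating the two costs a further commutation and boundedness step, and you are still left with $\|\wD^3\wV_n f\|$, which again needs either a higher-order Bernstein inequality or extra operator factors. In the paper the analogous absorption is clean: the remainder $\theta(\ell)\|\wD^3\wV_n^3 f\|$ is reduced to $\wC n\,\theta(\ell)\|\wD^2\wV_n^3 f\|$ plus fully estimated terms, and the choice $\ell\ge\frac{16\wC}{9}n$ makes $\frac{4\wC n}{9\ell^3}\le\frac{1}{4\ell^2}$, so the $\|\wD^2\wV_n^3 f\|$ term is absorbed into the left-hand side. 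To repair your argument, either prove the second- and third-order Bernstein inequalities directly (a substantial computation not in the paper) or switch to the comparison function $\wV_n^3 f$, which makes everything reachable with Lemmas~3.1, 4.1 and 4.2 as stated. (Minor point: in the ``in particular'' clause you should take $\ell=\lceil Ln\rceil$, since $Ln$ need not be an integer.)
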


The article is organized in the following way. Section~1 is an introduction to the topic. We~give
notations, define a new modification of the Baskakov operator and claim our main results. Section~2
includes preliminary and auxiliary statements. In Section~3 we present an estimation of the norm
of the operator $\wV_n$, a Jackson type inequality and a proof of the direct theorem. The converse
result for the modified Baskakov operator \eqref{eq:1.4} is discussed in Section~4. Inequalities of
the Voronovskaya type and Bernstein type for $\wV_n$ are proved using the differential
operator~$\wD$, defined in \eqref{eq:1.3}. Theorem~\ref{th:1.2} represents a strong converse
inequality of Type~B in the Ditzian-Ivanov classification~\cite{DiIv1993}. Finally, a proof of the
converse theorem is given.

%====================================================================================================

\bigskip
\section{Preliminaries and Auxiliary Results}

The central moments of the Baskakov operator are defined by
$$
  \mu_{n,j}(x) = B_n\big((t-x)^j,x\big) = \sum_{k=0}^{\infty} \Big(\frac{k}{n}-x\Big)^j P_{n,k}(x),
  \qquad j=0,1,\ldots\,.
$$
We set $P_{n,k}:=0$ if $k<0$. The next proposition summarizes several well known relations and
formulae for the Baskakov basis polynomials.

\begin{proposition}[see, e.g. {\cite[pp.~38--39]{IvPa2011}}] \label{pr:2.1}
  (a) The following identities are valid:
      \begin{align} \label{eq:2.1}
        & k P_{n,k}(x) = nx P_{n+1,k-1}(x), \qquad (n+k)P_{n,k}(x) = n(1+x)P_{n+1,k}(x), \smallskip\\
        & k(k-1)P_{n,k}(x) = n(n+1)x^2 P_{n+2,k-2}(x), \label{eq:2.2} \smallskip\\
        & (n+k)(n+k+1)P_{n,k}(x) = n(n+1)(1+x)^2 P_{n+2,k}(x), \label{eq:2.3} \smallskip\\
        & \tfrac{x}{1+x}\,P_{n,k}(x) = \tfrac{k+1}{n+k}\,P_{n,k+1}(x), \label{eq:2.4} \smallskip\\
        & \tfrac{1+x}{x}\,P_{n,k}(x) = \tfrac{n+k-1}{k}\,P_{n,k-1}(x), \label{eq:2.5} \smallskip\\
        & P_{n,k}'(x) = n\big[P_{n+1,k-1}(x) - P_{n+1,k}(x)\big], \label{eq:2.6} \smallskip\\
        & P_{n,k}''(x) = n(n+1)\big[P_{n+2,k-2}(x)-2P_{n+2,k-1}(x)+P_{n+2,k}(x)\big]. \label{eq:2.7}
      \end{align}

  (b) For the moments $\mu_{n,j}(x)$, $j=0,1,...,4$, we have:
      \begin{align*}
        \mu_{n,0}(x) & = B_n\big((t-x)^0,x\big) = 1, \\
        \mu_{n,1}(x) & = B_n\big((t-x),x\big) = 0, \\
        \mu_{n,2}(x) & = B_n\big((t-x)^2,x\big) = \frac{\psi(x)}{n}, \\
        \mu_{n,3}(x) & = B_n\big((t-x)^3,x\big) = \frac{(1+2x)\psi(x)}{n^2}, \\
        \mu_{n,4}(x) & = B_n\big((t-x)^4,x\big) = \frac{3(n+2)\psi^2(x)}{n^3} + \frac{\psi(x)}{n^3}.
      \end{align*}
\end{proposition}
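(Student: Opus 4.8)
The plan is to derive everything from the closed form $P_{n,k}(x)=\binom{n+k-1}{k}x^k(1+x)^{-n-k}$ together with the negative binomial series $\sum_{k=0}^{\infty}\binom{n+k-1}{k}y^{k}=(1-y)^{-n}$, valid for $y=\frac{x}{1+x}\in[0,1)$ when $x\ge 0$.

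For part (a), I would check the purely algebraic identities \eqref{eq:2.1}--\eqref{eq:2.5} by substituting the closed form and cancelling powers of $x$ and $1+x$; each collapses to a one-line factorial identity such as $k\binom{n+k-1}{k}=n\binom{n+k-1}{k-1}$, so no real work is involved. The derivative formulas are the only part needing an idea. I would start from the logarithmic derivative
\[
  P_{n,k}'(x)=\Big(\frac{k}{x}-\frac{n+k}{1+x}\Big)P_{n,k}(x)=\frac{k-nx}{\psi(x)}\,P_{n,k}(x),
\]
and then rewrite $\frac{k}{x}P_{n,k}(x)=nP_{n+1,k-1}(x)$ and $\frac{n+k}{1+x}P_{n,k}(x)=nP_{n+1,k}(x)$ using the two first-order identities in \eqref{eq:2.1}; subtracting them gives \eqref{eq:2.6}. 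Formula \eqref{eq:2.7} then follows immediately by differentiating \eqref{eq:2.6} and applying \eqref{eq:2.6} once more at level $n+1$.

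For part (b), the first two moments serve as the base case: $\mu_{n,0}(x)=1$ is exactly the negative binomial sum with $y=\frac{x}{1+x}$, while $\mu_{n,1}(x)=0$ follows from $\frac{k}{n}P_{n,k}(x)=xP_{n+1,k-1}(x)$ (the first identity of \eqref{eq:2.1}) summed over $k$, which yields $B_n(t,x)=x$. The engine for the remaining moments is the recurrence
\[
  \mu_{n,j+1}(x)=\frac{\psi(x)}{n}\big(\mu_{n,j}'(x)+j\,\mu_{n,j-1}(x)\big),
\]
which I would obtain by differentiating $\mu_{n,j}(x)=\sum_k(k/n-x)^jP_{n,k}(x)$ term by term and inserting the logarithmic derivative above, so that the factor $k-nx=n(k/n-x)$ raises the power by one while the derivative of $(k/n-x)^j$ contributes the $-j\,\mu_{n,j-1}$ term. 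Substituting $j=1,2,3$ and simplifying with the identity $(1+2x)^2=1+4\psi(x)$ produces $\mu_{n,2}$, $\mu_{n,3}$, $\mu_{n,4}$ in the stated form.

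The one point that genuinely requires care---rather than being a real obstacle---is the legitimacy of differentiating the infinite series defining $\mu_{n,j}$ term by term; this is justified by its locally uniform convergence on $(0,\infty)$, and since each $\mu_{n,j}$ is a polynomial the resulting identities extend to $x=0$ by continuity.
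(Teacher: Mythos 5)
Your proposal is correct. Note that the paper itself offers no proof of this proposition---it simply cites \cite[pp.~38--39]{IvPa2011}---so there is no argument in the text to compare against; what you have written is a sound self-contained derivation of exactly the cited facts. The algebraic identities \eqref{eq:2.1}--\eqref{eq:2.5} do reduce to one-line binomial manipulations as you claim, the logarithmic derivative $P_{n,k}'(x)=\frac{k-nx}{\psi(x)}P_{n,k}(x)$ combined with the two first-order identities gives \eqref{eq:2.6} and hence \eqref{eq:2.7}, and your moment recurrence $\mu_{n,j+1}(x)=\frac{\psi(x)}{n}\big(\mu_{n,j}'(x)+j\,\mu_{n,j-1}(x)\big)$ is the standard device for central moments of Baskakov-type operators; I checked that it reproduces $\mu_{n,2},\mu_{n,3},\mu_{n,4}$ exactly as stated (in particular the $\frac{1}{n^2}\big(2\psi(x)+1+4\psi(x)\big)$ step for $\mu_{n,3}'$ and the final regrouping into $\frac{3(n+2)\psi^2(x)}{n^3}+\frac{\psi(x)}{n^3}$). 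Your remark on term-by-term differentiation, justified by the geometric decay of $P_{n,k}(x)$ in $k$ on compact subsets of $(0,\infty)$ and the polynomial nature of the moments, closes the only analytic gap. In short: the approach is the expected one, and nothing is missing.
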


The operators $V_n$ and $\wV_n$ together with the differential operator $\wD$ have specific
commutative relations collected in the statement below. We have also added some other important
properties.

\begin{proposition} \label{pr:2.2}
  If the operators $V_n$, $\wV_n$ and the differential operator $\wD$ are defined as in
  \eqref{eq:1.2}, \eqref{eq:1.4} and \eqref{eq:1.3}, respectively, then

  \medskip
  (a) \ $\wD V_n f = V_n \wD f$ for $f\in W^2_0(\psi)$;

  \smallskip
  (b) \ $\wV_n f = V_n\big(f-\frac{1}{n}\,\wD f\big)$ for $f\in W^2_0(\psi)$;

  \smallskip
  (c) \ $\wD \wV_n f = \wV_n \wD f$ for $f\in W^2_0(\psi)$;

  \smallskip
  (d) \ $V_n \wV_n f = \wV_n V_n f$ for $f\in W^2_0(\psi)$;

  \smallskip
  (e) \ $\wV_m \wV_n f = \wV_n \wV_m f$ for $f\in W^2_0(\psi)$;

  \smallskip
  (f) \ $\lim\limits_{n\to \infty} \wV_n f = f$ for $f\in W^2(\psi)$;

  \smallskip
  (g) \ $\big\| \wD V_n f\big\| \le \big\|\wD f\big\|$  for  $f\in W^2(\psi)$.
\end{proposition}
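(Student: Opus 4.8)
The plan is to make part~(a) the engine: once the commutation $\wD V_n f = V_n \wD f$ is established, parts (b)--(e) follow by algebra, and (f)--(g) follow from the contraction and approximation properties of $V_n$.

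For (a) I would evaluate both sides directly. On the left, differentiating \eqref{eq:1.2} twice and using \eqref{eq:2.7} gives $\wD V_n f = \psi \sum_{k} v_{n,k}(f)\,P_{n,k}''$ written through the polynomials $P_{n+2,\cdot}$. On the right, $V_n\wD f = \sum_k v_{n,k}(\wD f)P_{n,k}$; here the hypothesis $f\in W^2_0(\psi)$ is used to kill the term $v_{n,0}(\wD f)=\lim_{x\to0^+}\wD f(x)=0$, and for $k\ge1$ I would integrate by parts twice in $v_{n,k}(\wD f)=(n+1)\int_0^\infty P_{n+2,k-1}(t)\psi(t)f''(t)\,dt$. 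Since $\psi(0)=0$ and $\psi(t)P_{n+2,k-1}(t)$ decays at infinity, the boundary contributions can be discarded, and the recurrences \eqref{eq:2.1}--\eqref{eq:2.6} let me rewrite the resulting integrands so that the coefficients of $P_{n,k}(x)$ match the left-hand side. I expect the bookkeeping of this reindexing, together with the careful verification that every boundary term truly vanishes, to be the main obstacle of the whole proposition.

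With (a) in hand the rest is mostly formal. Applying $\wD$ termwise to \eqref{eq:1.4}--\eqref{eq:1.5} gives $\wV_n f = V_n f - \frac1n\wD V_n f$, and (a) turns this into $\wV_n f = V_n\big(f-\frac1n\wD f\big)$, which is (b); equivalently $\wV_n = V_n(I-\frac1n\wD) = (I-\frac1n\wD)V_n$. Part (c) is then $\wD\wV_n f = \wD(I-\frac1n\wD)V_n f = (I-\frac1n\wD)V_n\wD f = \wV_n\wD f$, using (a). Part (d) is $V_n\wV_n = V_n(I-\frac1n\wD)V_n = (I-\frac1n\wD)V_n^2 = \wV_n V_n$. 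For (e), the same factorization gives $\wV_m\wV_n = (I-\frac1m\wD)(I-\frac1n\wD)V_mV_n$ and symmetrically for $\wV_n\wV_m$; since the two scalar factors $(I-\frac1m\wD)$ and $(I-\frac1n\wD)$ commute, (e) reduces to the commutativity $V_mV_n=V_nV_m$ of the base operators, which I would obtain by writing out the double integral defining the composition and exhibiting its symmetry in $m$ and $n$ (or by citing this known property of the Baskakov--Durrmeyer operators). Throughout I would watch the domains, checking that $f-\frac1n\wD f$ and $\wD f$ remain in the space on which (a) and (b) have been justified.

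Finally, (f) follows from (b): $\|\wV_n f - f\| \le \frac1n\|V_n\wD f\| + \|V_n f - f\| \le \frac1n\|\wD f\| + \|V_n f - f\|$, where the first term vanishes because $\wD f\in L_\infty$ and the second by the known convergence $V_n f\to f$; here I only use that $V_n$ is positive with $V_n 1 = 1$, so $\|V_n\|=1$. The same contraction property yields (g): by (a), $\|\wD V_n f\| = \|V_n\wD f\| \le \|\wD f\|$. Since (a) was stated under the boundary condition of $W^2_0(\psi)$ while (g) is claimed on the larger space $W^2(\psi)$, I would check that the commutation (or at least the inequality) persists there, the essential point again being $\|V_n\|=1$.
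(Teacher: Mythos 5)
Your treatment of (b)--(e) is essentially the paper's: you take (a) as the engine, derive the factorization $\wV_n=V_n(I-\tfrac1n\wD)=(I-\tfrac1n\wD)V_n$ termwise from \eqref{eq:1.4}--\eqref{eq:1.5}, and reduce (c), (d), (e) to algebra plus the commutativity $V_mV_n=V_nV_m$ (which the paper also invokes without proof; your suggestion that it is visible as a ``manifest symmetry of the double integral'' is optimistic --- the composed kernel is not symmetric in $m$ and $n$ on its face --- but citing it as known is what the paper effectively does). The divergence is in (a), (f), (g), where the paper simply cites \cite{IvPa2011} (Theorem~2.5, Lemma~3.2, Lemma~2.8 respectively) while you sketch proofs, and it is there that the gaps lie.

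The clearest gap is (g). You propose $\|\wD V_nf\|=\|V_n\wD f\|\le\|\wD f\|$ ``by (a)'' plus $\|V_n\|=1$, and you correctly notice that (a) is only available on $W^2_0(\psi)$ while (g) is asserted on $W^2(\psi)$; but your fallback --- that ``the essential point is again $\|V_n\|=1$'' --- does not repair this. For $f\in W^2(\psi)$ with $\lim_{x\to0^+}\wD f(x)\ne0$ the commutation genuinely fails (the term $v_{n,0}(\wD f)P_{n,0}$ no longer matches), and the contraction of $\wD V_n$ is not a formal consequence of positivity and $V_n1=1$; it requires the explicit representation of $\wD V_nf$ and is exactly why the paper cites a separate lemma. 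Secondly, in (a) your claim that after two integrations by parts ``the boundary contributions can be discarded'' because $\psi(0)=0$ is not safe: for $k=1$ one has $P_{n+2,0}(0)=1$ and $(\psi P_{n+2,0})'(0)=1$, so the second integration by parts produces a nonzero boundary term proportional to $f(0)$; the identity holds because these terms cancel against the reindexed contributions, not because each one vanishes, and this cancellation is the substance of the cited Theorem~2.5 of \cite{IvPa2011}. Finally, in (f) the convergence $\|V_nf-f\|\to0$ uniformly on the unbounded interval does not follow from positivity and $V_n1=1$ alone for bounded continuous $f$; what makes it work for $f\in W^2(\psi)$ is the quantitative Jackson bound $\|V_nf-f\|\le\tfrac1n\|\wD f\|$ of \eqref{eq:2.8}, which is the estimate the paper actually uses and which you should invoke explicitly.
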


\begin{proof}
For the proof of (a), see \cite[Theorem 2.5]{IvPa2011}.

We have
\begin{align*}
  \wV_{n} f & = \sum_{k=0}^{\infty} v_{n,k}(f)\wP_{n,k} \\
            & = v_{n,0}(f)\Big(P_{n,0} - \frac{1}{n}\,\wD P_{n,0}\Big)
                + \sum_{k=1}^{\infty} v_{n,k}(f)\Big(P_{n,k} - \frac{1}{n}\,\wD P_{n,k}\Big) \\
                 & = v_{n,0}(f)P_{n,0} + \sum_{k=1}^{\infty} v_{n,k}(f) P_{n,k} -\frac{\psi}{n}\bigg(v_{n,0}(f)D^2P_{n,0} + \sum_{k=1}^{\infty} v_{n,k}(f)D^2P_{n,k}\bigg) \\
            & = V_n f - \frac{1}{n}\,\psi D^2V_n f=V_n f - \frac{1}{n}\,\wD V_n f.
\end{align*}
Then from (a) we obtain
$$
  \wV_n f = V_n f - \frac{1}{n}\,\wD V_n f = V_n f - \frac{1}{n}\,V_n \wD f
          = V_n\Big(f-\frac{1}{n}\,\wD f\Big),
$$
which proves (b).

Now, commutative properties (c) and (d) follow from (b) and (a):
$$
  \wD \wV_n f = \wD V_n\Big(f-\frac{1}{n}\,\wD f\Big) = V_n\Big(\wD f - \frac{1}{n}\,\wD \wD f\Big)
               = \wV_n(\wD f),
$$
and
\begin{align*}
  V_n \wV_n f & = V_n\Big(V_nf-\frac{1}{n}\,\wD V_n f \Big) = V_n V_n f - \frac{1}{n}\,V_n \wD V_n f \\
              & = V_n V_n f - \frac{1}{n}\,\wD V_n V_n f = V_n (V_n f)-\frac{1}{n}\,\wD V_n (V_n f)
                 = \wV_n V_n f.
\end{align*}

The operators $\wV_n$ commute in the sense of (e), since
\begin{align*}
  \wV_m \wV_n f & = \wV_m \Big(V_n f - \frac{1}{n}\,\wD V_n f\Big) \\
                & = V_m \Big(V_n f - \frac{1}{n}\,\wD V_n f\Big)
                    -\frac{1}{m} \wD V_m\Big(V_n f -\frac{1}{n}\,\wD V_n f\Big) \\
                & = V_m V_n \Big(f - \frac{m+n}{mn}\,\wD f + \frac{1}{mn}\,\wD^2 f\Big).
\end{align*}
The same expression in the last line we obtain for $\wV_n \wV_m f$ because of properties
(a), (b) and $V_m V_n f = V_n V_m f$.

In \cite[Lemma 3.2]{IvPa2011} it was proved that
\begin{align} \label{eq:2.8}
  \|V_n f  - f\| & \le \frac{1}{n}\,\big\|\wD f\big\|, \\
  \big\|V_n \wD f\big\| & \le \big\|\wD f\big\|. \notag
\end{align}
Hence
$$
  \|\wV_n f - f\| = \Big\|V_n f  -\frac{1}{n}\,V_n \wD f - f\Big\|
                   \le \|V_n f - f\| + \frac{1}{n}\,\big\|V_n \wD f\big\|
                   \le \frac{2}{n}\,\|\wD f\|.
$$
Therefore $\lim\limits_{n\to \infty} \|\wV_n f - f\|=0$, i.e. property (f) holds true.

For a proof of the inequality in (g) we refer to \cite[Lemma 2.8]{IvPa2011} with $w=1$.
\end{proof}

\smallskip
Now we introduce a function that will prove useful in our further investigations:
\begin{align} \label{eq:2.9}
  T_{n,k}(x) & := k(k-1)\frac{1+x}{x} - 2k(n+k) + (n+k)(n+k+1)\frac{x}{1+x} \\
             & = n\bigg[-1-\frac{1+2x}{\psi(x)}\Big(\frac{k}{n}-x\Big) + \frac{n}{\psi(x)}\Big(\frac{k}{n}-x\Big)^2\bigg].
               \notag
\end{align}

Observe that
\begin{align}
  T_{n,k}'(x)  & = - \frac{k(k-1)}{x^2} + \frac{(n+k)(n+k+1)}{(1+x)^2}, \label{eq:2.10} \\
  T_{n,k}''(x) & = \frac{2k(k-1)}{x^3} - \frac{2(n+k)(n+k+1)}{(1+x)^3}. \label{eq:2.11}
\end{align}

\begin{proposition} \label{pr:2.3}
(a) The following relation concerning functions $P_{n,k}$, $T_{n,k}$ and differential operator
$\wD$ are valid:
\begin{equation} \label{eq:2.12}
  \wD P_{n,k}(x) = T_{n,k}(x) P_{n,k}(x).
\end{equation}

(b) If $\,\alpha$ is an arbitrary real number, then
$$
  \Phi(\alpha) := \sum_{k=0}^{\infty} \Big(\alpha - \frac{1}{n}\,T_{n,k}(x)\!\Big)^2 P_{n,k}(x)
               = \alpha^2 + 2 + \frac{2}{n}\,.
$$
\end{proposition}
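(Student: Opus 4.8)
For part (a), the plan is to begin from the definition $\wD P_{n,k} = \psi P_{n,k}''$ and insert the explicit second derivative \eqref{eq:2.7}, $P_{n,k}'' = n(n+1)\big[P_{n+2,k-2} - 2P_{n+2,k-1} + P_{n+2,k}\big]$. The task then reduces to expressing each of the shifted basis functions $P_{n+2,k-2}$, $P_{n+2,k-1}$, $P_{n+2,k}$ as a multiple of $P_{n,k}$ and reading off the coefficient. The two outer terms are immediate from \eqref{eq:2.2} and \eqref{eq:2.3}: solving these for $n(n+1)P_{n+2,k-2}$ and $n(n+1)P_{n+2,k}$ and multiplying by $\psi = x(1+x)$ gives $n(n+1)\psi\,P_{n+2,k-2} = k(k-1)\frac{1+x}{x}P_{n,k}$ and $n(n+1)\psi\,P_{n+2,k} = (n+k)(n+k+1)\frac{x}{1+x}P_{n,k}$, i.e. the first and third summands of $T_{n,k}$ in \eqref{eq:2.9}.

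The middle term needs a short derivation, since no single identity in Proposition~\ref{pr:2.1} links $P_{n+2,k-1}$ directly to $P_{n,k}$. I would compose the two basic relations in \eqref{eq:2.1}: applying $(n+k)P_{n,k} = n(1+x)P_{n+1,k}$ with indices shifted to $n+1,\,k-1$ yields $(n+k)P_{n+1,k-1} = (n+1)(1+x)P_{n+2,k-1}$, and then $kP_{n,k} = nxP_{n+1,k-1}$ eliminates $P_{n+1,k-1}$. The net identity is $n(n+1)\psi\,P_{n+2,k-1} = k(n+k)P_{n,k}$, so the middle summand contributes exactly $-2k(n+k)P_{n,k}$. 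Collecting the three contributions reproduces the first expression for $T_{n,k}(x)$ in \eqref{eq:2.9}, which proves (a).

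For part (b), the plan is to expand the square and reduce everything to the central moments $\mu_{n,j}(x)$ of Proposition~\ref{pr:2.1}(b): writing $\Phi(\alpha) = \alpha^2\sum_k P_{n,k} - \frac{2\alpha}{n}\sum_k T_{n,k}P_{n,k} + \frac{1}{n^2}\sum_k T_{n,k}^2 P_{n,k}$, the first sum is $\mu_{n,0} = 1$. For the other two I would use the \emph{second} form of $T_{n,k}$ in \eqref{eq:2.9}, which displays $\frac{1}{n}T_{n,k}$ as the quadratic $-1 - \frac{1+2x}{\psi}\big(\frac{k}{n}-x\big) + \frac{n}{\psi}\big(\frac{k}{n}-x\big)^2$ in the variable $\frac{k}{n}-x$. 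Since $\sum_k\big(\frac{k}{n}-x\big)^j P_{n,k} = \mu_{n,j}(x)$, the middle sum collapses to $-\mu_{n,0} + \frac{n}{\psi}\mu_{n,2} = -1 + 1 = 0$ (the $\mu_{n,1}$ term vanishing), which removes the $\alpha$-linear term of $\Phi$ entirely.

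The remaining piece, $\frac{1}{n^2}\sum_k T_{n,k}^2 P_{n,k}$, is the principal bookkeeping step: squaring the quadratic gives a linear combination of $\mu_{n,0},\dots,\mu_{n,4}$, and substituting their values from Proposition~\ref{pr:2.1}(b) produces an expression that is \emph{a priori} a function of $x$. I expect the decisive simplification to be the elementary identity $1 - (1+2x)^2 = -4\psi(x)$, by which all $x$-dependent contributions (the $\frac{(1+2x)^2}{n\psi}$ and $\frac{1}{n\psi}$ terms) collapse to the constant $-\frac{4}{n}$, leaving $2 + \frac{2}{n}$. Summing the three pieces then yields $\Phi(\alpha) = \alpha^2 + 2 + \frac{2}{n}$. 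The only genuine obstacle is the middle term of part (a), where the required recurrence is not tabulated and must be assembled from two listed identities; part (b) is otherwise a routine but slightly delicate moment computation whose clean, $x$-free outcome rests entirely on the $-4\psi$ cancellation.
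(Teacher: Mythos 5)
Your proposal is correct and follows essentially the same route as the paper: part (a) rests on \eqref{eq:2.7} together with \eqref{eq:2.2}, \eqref{eq:2.3} and the identity $n(n+1)\psi P_{n+2,k-1}=k(n+k)P_{n,k}$ (which the paper records as \eqref{eq:2.13} and you correctly rederive from \eqref{eq:2.1}), and part (b) is the same moment computation via the second form of $T_{n,k}$ in \eqref{eq:2.9}, with your $1-(1+2x)^2=-4\psi(x)$ cancellation being exactly the paper's use of $(1+2x)^2=1+4\psi(x)$. The only difference is organizational (you isolate the $\alpha$-linear term and show it vanishes before squaring $T_{n,k}$), which does not change the substance.
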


\begin{proof}
(a) \ It is easy to see that
\begin{equation} \label{eq:2.13}
  \psi(x)P_{n+2,k-1}(x) = \frac{k(n+k)}{n(n+1)}\,P_{n,k}(x).
\end{equation}
By using \eqref{eq:2.7} and then \eqref{eq:2.2}, \eqref{eq:2.13}, \eqref{eq:2.3} we obtain
\begin{align*}
  \psi(x)P_{n,k}''(x)
    & = n(n+1)\big[\psi(x)P_{n+2,k-2}(x)-2\psi(x)P_{n+2,k-1}(x)+\psi(x)P_{n+2,k}(x)\big] \\
    & = n(n+1)\bigg[\psi(x)\,\frac{k(k-1)}{n(n+1)x^2}\,P_{n,k}(x)-2\,\frac{k(n+k)}{n(n+1)}\,P_{n,k}(x) \\
    & \hspace*{21mm}  + \psi(x)\frac{(n+k)(n+k+1)}{n(n+1)(1+x)^2}\,P_{n,k}(x)\bigg] \\
    & = \bigg[k(k-1)\frac{1+x}{x} - 2k(n+k) + (n+k)(n+k+1)\frac{x}{1+x}\bigg]P_{n,k}(x) \\
    & = T_{n,k}(x)P_{n,k}(x),
\end{align*}
i.e. the identity \eqref{eq:2.12}.

(b) \ We apply the formulae for the Baskakov operator moments in Proposition~\ref{pr:2.1}\,(b):
\begin{align*}
  \Phi(\alpha)
    & = \sum_{k=0}^{\infty} \bigg[\alpha + 1 + \frac{1+2x}{\psi(x)} \Big(\frac{k}{n}-x\Big)
                          - \frac{n}{\psi(x)}\Big(\frac{k}{n}-x\Big)^{\!2}\bigg]^2 P_{n,k}(x) \\
    & = \sum_{k=0}^{\infty} \bigg[(\alpha+1)^2 \!+ \frac{(1+2x)^2}{\psi^2(x)}\Big(\frac{k}{n}-x\Big)^2
                            \!+ \frac{n^2}{\psi^2(x)}\Big(\frac{k}{n}-x\Big)^4
                            \!+ \frac{2(\alpha+1)(1+2x)}{\psi(x)}\Big(\frac{k}{n}-x\Big) \\
    & \hspace*{15mm}        - \frac{2(\alpha+1)n}{\psi(x)}\Big(\frac{k}{n}-x\Big)^2
                            \!- \frac{2n(1+2x)}{\psi^2(x)}\Big(\frac{k}{n}-x\Big)^3 \bigg] P_{n,k}(x) \\
    & = (\alpha+1)^2\mu_{n,0}(x) + \frac{(1+2x)^2}{\psi^2(x)}\,\mu_{n,2}(x) + \frac{n^2}{\psi^2(x)}\,\mu_{n,4}(x)
                            + \frac{2(\alpha+1)(1+2x)}{\psi(x)}\,\mu_{n,1}(x) \\
    & \qquad                - \frac{2(\alpha+1)n}{\psi(x)}\,\mu_{n,2}(x) - \frac{2n(1+2x)}{\psi^2(x)}\,\mu_{n,3}(x) \\
    & = (\alpha+1)^2\cdot 1 + \frac{(1+2x)^2}{\psi^2(x)}\,\frac{\psi(x)}{n}
                            + \frac{n^2}{\psi^2(x)}\,\frac{(3n+6)\psi^2(x)+\psi(x)}{n^3} \\
    & \qquad                + \frac{2(\alpha+1)(1+2x)}{\psi(x)}\cdot 0 - \frac{2(\alpha+1)n}{\psi(x)}\,\frac{\psi(x)}{n}
                            - \frac{2n(1+2x)}{\psi^2(x)}\,\frac{(1+2x)\psi(x)}{n^2} \\
    & = (\alpha+1)^2 + \frac{1+4\psi(x)}{n\psi(x)} + \frac{(3n+6)\psi(x)+1}{n\psi(x)} - 2(\alpha+1)
                     - \frac{2(1+4\psi(x))}{n\psi(x)} \\
    & = \alpha^2 + 2\alpha + 1 + \frac{1}{n\psi(x)} + \frac{4}{n} + 3 +\frac{6}{n} + \frac{1}{n\psi(x)}
                               -2\alpha - 2 - \frac{2}{n\psi(x)} - \frac{8}{n} \\
   & = \alpha^2 + 2 +\frac{2}{n}.
\end{align*}
\end{proof}

\begin{proposition} \label{pr:2.4}
The following relations hold true:

%(a)
%\begin{align*}
%  & T_{n+1,k-1}(x)P_{n+1,k-1}(x) + T_{n+1,k}(x)P_{n+1,k}(x) \\
%  & \qquad = (k-2)(n+k-1)P_{n+1,k-2}(x) - (k-1)(n+k)P_{n+1,k-1}(x) \\
%  & \qquad\quad - k(n+k+1)P_{n+1,k}(x) + (k+1)(n+k+2)P_{n+1,k+1}(x);
%\end{align*}

\medskip
(a) \ $T_{n+1,k-1}(x)P_{n+1,k-1}(x) + T_{n+1,k}(x)P_{n+1,k}(x)$

    \qquad\quad $ = (k-2)(n+k-1)P_{n+1,k-2}(x) - (k-1)(n+k)P_{n+1,k-1}(x)$

    \qquad\qquad $- k(n+k+1)P_{n+1,k}(x) + (k+1)(n+k+2)P_{n+1,k+1}(x)$;

%(b)
%\begin{align*}
%  - \frac{\psi(x)}{n}\,T_{n,k}'(x)\,P_{n,k}'(x)
%    & = k(n+k-1)P_{n+1,k-2}(x) - (k-1)(n+k)P_{n+1,k-1}(x) \\
%    & \quad - k(n+k+1)P_{n+1,k}(x) + (k+1)(n+k)P_{n+1,k+1}(x);
%\end{align*}

\smallskip
(b) \ $- \dfrac{\psi(x)}{n}\,T_{n,k}'(x)\,P_{n,k}'(x)
       = k(n+k-1)P_{n+1,k-2}(x) - (k-1)(n+k)P_{n+1,k-1}(x)$

      \hspace*{50mm} $- k(n+k+1)P_{n+1,k}(x) + (k+1)(n+k)P_{n+1,k+1}(x)$;

%(c)
%$$
%  \frac{\psi(x)}{n}\,T_{n,k}''(x)P_{n,k}(x) = 2(n+k-1)P_{n+1,k-2}(x) - 2(k+1)P_{n+1,k+1}(x).
%$$

\smallskip
(c) \ $\dfrac{\psi(x)}{n}\,T_{n,k}''(x)P_{n,k}(x) = 2(n+k-1)P_{n+1,k-2}(x) - 2(k+1)P_{n+1,k+1}(x)$.
\end{proposition}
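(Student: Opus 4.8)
The plan is to establish all three identities by direct computation, drawing on the explicit expressions \eqref{eq:2.9}, \eqref{eq:2.10}, \eqref{eq:2.11} for $T_{n,k}$ and its derivatives together with the basis relations of Proposition~\ref{pr:2.1}. The recurring mechanism is that each product of the type $\frac{1+x}{x}\,P_{n,k}$ or $\frac{x}{1+x}\,P_{n,k}$ collapses to a single neighbouring basis function through \eqref{eq:2.5} and \eqref{eq:2.4}, so the whole task reduces to matching coefficients.

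For part (a) I would first record the reduction formula
$$T_{n,k}(x)P_{n,k}(x) = (k-1)(n+k-1)P_{n,k-1}(x) - 2k(n+k)P_{n,k}(x) + (k+1)(n+k+1)P_{n,k+1}(x),$$
which follows immediately from the first line of \eqref{eq:2.9}: the term $k(k-1)\frac{1+x}{x}P_{n,k}$ is rewritten by \eqref{eq:2.5}, the term $(n+k)(n+k+1)\frac{x}{1+x}P_{n,k}$ by \eqref{eq:2.4}, and the middle term is kept unchanged. Applying this formula once with $n\to n+1$, $k\to k-1$ and once with $n\to n+1$ (keeping $k$), and adding the two outputs, the overlapping coefficients of $P_{n+1,k-1}$ and $P_{n+1,k}$ combine to produce exactly the right-hand side claimed in (a).

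For part (b) the starting point is the factorization $\psi(x)T_{n,k}'(x) = -k(k-1)\frac{1+x}{x} + (n+k)(n+k+1)\frac{x}{1+x}$ obtained from \eqref{eq:2.10}, together with $P_{n,k}'(x) = n\big[P_{n+1,k-1}(x) - P_{n+1,k}(x)\big]$ from \eqref{eq:2.6}. Hence $-\frac{\psi}{n}T_{n,k}'P_{n,k}' = -\psi T_{n,k}'\big(P_{n+1,k-1} - P_{n+1,k}\big)$, and it remains to expand the two products $\psi T_{n,k}'\cdot P_{n+1,k-1}$ and $\psi T_{n,k}'\cdot P_{n+1,k}$. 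Each of the four resulting terms is of the form $\frac{1+x}{x}P_{n+1,\cdot}$ or $\frac{x}{1+x}P_{n+1,\cdot}$ and collapses to a single basis function via \eqref{eq:2.5} and \eqref{eq:2.4} applied at index $n+1$; collecting terms gives the identity in (b).

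For part (c) I would use \eqref{eq:2.11} to write $\frac{\psi}{n}T_{n,k}'' = \frac{1}{n}\big(\frac{2k(k-1)(1+x)}{x^2} - \frac{2(n+k)(n+k+1)x}{(1+x)^2}\big)$ and treat the two summands separately. In the first I convert $P_{n,k}$ by $kP_{n,k} = nxP_{n+1,k-1}$ from \eqref{eq:2.1}, which cancels the factors $n$, $k$ and one power of $x$, and then apply \eqref{eq:2.5} to reach $2(n+k-1)P_{n+1,k-2}$; in the second I use $(n+k)P_{n,k} = n(1+x)P_{n+1,k}$ from \eqref{eq:2.1} followed by \eqref{eq:2.4} to reach $-2(k+1)P_{n+1,k+1}$. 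Since every step is routine once the correct relation is chosen, I expect no genuine difficulty beyond the bookkeeping; the only place demanding care is part (a), where the reduction formula must be evaluated at two different index pairs and the shared coefficients added without error.
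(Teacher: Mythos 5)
Your proposal is correct and follows essentially the same route as the paper: part (a) via the same reduction formula for $T_{n,k}P_{n,k}$ applied at the two shifted index pairs, part (b) by expanding $\psi T_{n,k}'$ against \eqref{eq:2.6} and collapsing with \eqref{eq:2.4}--\eqref{eq:2.5}, and part (c) by the same identities applied in a trivially different order. All coefficient bookkeeping checks out.
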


\begin{proof}
(a) \ By \eqref{eq:2.5} and \eqref{eq:2.4} we have
$$
  T_{n,k}(x) P_{n,k}(x) = (k-1)(n+k-1)\,P_{n,k-1}(x) - 2k(n+k)\,P_{n,k}(x)
                          + (k+1)(n+k+1)\,P_{n,k+1}(x).
$$
Then (a) follows immediately:
\begin{align*}
  & T_{n+1,k-1}(x)P_{n+1,k-1}(x) + T_{n+1,k}(x)P_{n+1,k}(x) \\
  & \qquad = (k-2)(n+k-1)P_{n+1,k-2}(x) - 2(k-1)(n+k)P_{n+1,k-1}(x) + k(n+k+1)P_{n+1,k}(x) \\
  & \qquad\quad + (k-1)(n+k)P_{n+1,k-1}(x) - 2k(n+k+1)P_{n+1,k}(x) + (k+1)(n+k+2)P_{n+1,k+1}(x) \\
  & \qquad = (k-2)(n+k-1)P_{n+1,k-2}(x) - (k-1)(n+k)P_{n+1,k-1}(x) \\
  & \qquad\quad - k(n+k+1)P_{n+1,k}(x) + (k+1)(n+k+2)P_{n+1,k+1}(x).
\end{align*}

\noindent
(b) \ From \eqref{eq:2.10}, \eqref{eq:2.6}, \eqref{eq:2.4} and \eqref{eq:2.5}
we have
\begin{align*}
  & - \frac{\psi(x)}{n}\,T_{n,k}'(x)\,P_{n,k}'(x) \\
  & \qquad = \frac{\psi(x)}{n}\bigg[\frac{k(k-1)}{x^2} - \frac{(n+k)(n+k+1)}{(1+x)^2}\bigg]n\big(P_{n+1,k-1}(x)-P_{n+1,k}(x)\big) \\
  & \qquad = k(k-1)\frac{1+x}{x}P_{n+1,k-1}(x) - k(k-1)\frac{1+x}{x}P_{n+1,k}(x) \\
  & \qquad\quad -(n+k)(n+k+1)\frac{x}{1+x}P_{n+1,k-1}(x) + (n+k)(n+k+1)\frac{x}{1+x}P_{n+1,k}(x) \\
%  & \qquad = k(k-1)\frac{n+k-1}{k-1}P_{n+1,k-2}(x) - k(k-1)\frac{n+k}{k}P_{n+1,k-1}(x) \\
%  & \qquad\quad - (n+k)(n+k+1)\frac{k}{n+k}P_{n+1,k}(x) + (n+k)(n+k+1)\frac{k+1}{n+k+1}P_{n+1,k+1}(x) \\
  & \qquad = k(n+k-1)P_{n+1,k-2}(x) - (k-1)(n+k)P_{n+1,k-1}(x) \\
  & \qquad\quad - k(n+k+1)P_{n+1,k}(x) + (k+1)(n+k)P_{n+1,k+1}(x).
\end{align*}

\noindent
(c) \ From \eqref{eq:2.11}, \eqref{eq:2.4}, \eqref{eq:2.5} and \eqref{eq:2.1}
we have
\begin{align*}
  \frac{\psi(x)}{n}\,T_{n,k}''(x)P_{n,k}(x)
  & = \frac{\psi(x)}{n}\bigg[\frac{2k(k-1)}{x^3} - \frac{2(n+k)(n+k+1)}{(1+x)^3}\bigg]P_{n,k}(x) \\
  & = \frac{2}{n}\bigg[\frac{k(k-1)}{x}\frac{1+x}{x}\,P_{n,k}(x) - \frac{(n+k)(n+k+1)}{1+x}\frac{x}{1+x}\,P_{n,k}(x)\bigg] \\
  & = \frac{2}{n}\bigg[\frac{k(k-1)}{x}\frac{n+k-1}{k}P_{n,k-1}(x) - \frac{(n+k)(n+k+1)}{1+x}\frac{k+1}{n+k}P_{n,k+1}(x)\bigg] \\
  & = 2(n+k-1)\frac{(k-1)P_{n,k-1}(x)}{nx} - 2(k+1)\frac{(n+k+1)P_{n,k+1}(x)}{n(1+x)} \\
%  & = 2(k-1)(n+k-1)\frac{P_{n+1,k-2}(x)}{k-1} - 2(k+1)(n+k+1)\frac{P_{n+1,k+1}(x)}{n+k+1} \\
  & = 2(n+k-1)P_{n+1,k-2}(x) - 2(k+1)P_{n+1,k+1}(x).
\end{align*}
\end{proof}

\begin{proposition} \label{pr:2.5}
  Let
  $$
    \lambda(n) := \sum_{k=n}^{\infty} \frac{1}{k(k+1)^2}\,, \qquad
    \theta(n) := \sum_{k=n}^{\infty} \frac{1}{k^2(k+1)^2}\,, \qquad n\in \mathbb{N}.
  $$
  Then, for $n\ge 2$ the next inequalities are satisfied:
  \begin{gather} \label{eq:2.14}
    \frac{1}{3n^2} \le \lambda(n) \le \frac{1}{n^2}\,, \\
    \theta(n) \le  \frac{4}{9n^3}\,. \label{eq:2.15}
  \end{gather}
\end{proposition}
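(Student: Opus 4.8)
The plan is to control each series by sandwiching its general term between products of three or four consecutive integers that telescope, thereby reducing the infinite sums to simple rational closed forms, and then to verify the stated bounds as elementary polynomial inequalities valid for $n\ge 2$. Throughout I will use the telescoping identities
\begin{equation*}
  \frac{1}{(k-1)k(k+1)} = \frac{1}{2}\Big(\frac{1}{(k-1)k} - \frac{1}{k(k+1)}\Big), \qquad
  \frac{1}{k(k+1)(k+2)} = \frac{1}{2}\Big(\frac{1}{k(k+1)} - \frac{1}{(k+1)(k+2)}\Big),
\end{equation*}
together with the four-factor analogue
\begin{equation*}
  \frac{1}{(k-1)k(k+1)(k+2)} = \frac{1}{3}\Big(\frac{1}{(k-1)k(k+1)} - \frac{1}{k(k+1)(k+2)}\Big),
\end{equation*}
whose telescoping sums from $k=n$ equal $\tfrac{1}{2(n-1)n}$, $\tfrac{1}{2n(n+1)}$ and $\tfrac{1}{3(n-1)n(n+1)}$, respectively.

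For the upper bound on $\lambda(n)$ I would use that $(k-1)k(k+1) < k(k+1)^2$, so that $\frac{1}{k(k+1)^2} < \frac{1}{(k-1)k(k+1)}$; summing from $k=n$ gives $\lambda(n) < \frac{1}{2(n-1)n}$, and the inequality $\frac{1}{2(n-1)n} \le \frac{1}{n^2}$ reduces to $n(n-2)\ge 0$, which holds for $n\ge 2$. For the lower bound I would instead compare downward, using $k(k+1)^2 < k(k+1)(k+2)$ to get $\frac{1}{k(k+1)^2} > \frac{1}{k(k+1)(k+2)}$; summing yields $\lambda(n) > \frac{1}{2n(n+1)}$, and $\frac{1}{2n(n+1)} \ge \frac{1}{3n^2}$ again reduces to $n(n-2)\ge 0$. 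Together these establish \eqref{eq:2.14}.

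For $\theta(n)$ only an upper bound is required. Here I would compare the square $k^2(k+1)^2$ with the product of four consecutive factors $(k-1)k(k+1)(k+2)$: since $k^2(k+1)^2 = k(k+1)\big(k^2+k\big)$ while $(k-1)k(k+1)(k+2) = k(k+1)\big(k^2+k-2\big)$, the former is the larger, so $\frac{1}{k^2(k+1)^2} < \frac{1}{(k-1)k(k+1)(k+2)}$. Summing from $k=n$ and invoking the four-factor identity gives $\theta(n) < \frac{1}{3(n-1)n(n+1)}$, and the target $\frac{1}{3(n-1)n(n+1)} \le \frac{4}{9n^3}$ is equivalent, after clearing denominators, to $3n(n^2-4)\ge 0$, i.e.\ to $n\ge 2$. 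This proves \eqref{eq:2.15}.

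The computations are entirely routine; the only genuine decision is selecting, for each target constant, the comparison product whose telescoped value matches that constant after the final polynomial inequality. The mild subtlety is that every sandwiching step introduces a one-step index shift (the factor $k-1$ or $k+2$), so that each reduced inequality is sharp precisely at the endpoint $n=2$ — exactly the range assumed in the proposition — and one must be careful that the direction of each comparison is consistent with whether an upper or a lower estimate is being sought.
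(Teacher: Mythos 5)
Your proof is correct and follows essentially the same route as the paper: both bound $\lambda(n)$ between the telescoping sums $\sum 1/[k(k+1)(k+2)] = \tfrac{1}{2n(n+1)}$ and $\sum 1/[(k-1)k(k+1)] = \tfrac{1}{2n(n-1)}$, and bound $\theta(n)$ by $\sum 1/[(k-1)k(k+1)(k+2)] = \tfrac{1}{3n(n^2-1)}$, then reduce to the same elementary inequalities in $n$. No substantive differences.
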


\begin{proof}
For $\lambda(n)$ we have
$$
  \frac{1}{3n^2}\le \frac1{2n(n+1)} = \sum_{k=n}^{\infty} \frac{1}{k(k+1)(k+2)} < \lambda(n) <
  \sum_{k=n}^{\infty} \frac{1}{(k-1)k(k+1)} = \frac1{2n(n-1)} \le \frac{1}{n^2}\,.
$$

For the upper estimate of $\theta(n)$ we obtain
$$
  \theta(n) < \sum_{k=n}^{\infty} \frac{1}{(k-1)k(k+1)(k+2)} = \frac{1}{3n(n^2-1)} \le \frac{4}{9n^3}\,.
$$
\end{proof}

%====================================================================================================

\bigskip
\section{A Direct Theorem}

First we estimate from above the norm of the operator $\wV_n$ defined in \eqref{eq:1.4}.

\begin{lemma} \label{le:3.1}
  If $\,n\ge 2$, $n\in \mathbb{N}$ and $f\in C[0,\infty)$, then
  \begin{equation} \label{eq:3.1}
    \big\|\wV_n f\big\| \le 2\,\|f\|.
  \end{equation}
\end{lemma}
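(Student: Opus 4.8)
The plan is to reduce the norm bound to a pointwise estimate on the sum $\sum_{k=0}^{\infty}|\wP_{n,k}(x)|$ of the absolute values of the modified basis functions. Since $\wV_n f = \sum_{k=0}^{\infty} v_{n,k}(f)\wP_{n,k}$, once I know that the coefficient functionals satisfy $|v_{n,k}(f)|\le\|f\|$ for every $k$, I immediately get $|\wV_n(f,x)|\le\|f\|\sum_{k=0}^{\infty}|\wP_{n,k}(x)|$, so it suffices to show $\sum_{k=0}^{\infty}|\wP_{n,k}(x)|\le 2$ uniformly in $x\in[0,\infty)$ and $n\ge2$.

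For the coefficient bound, I would note that $v_{n,0}(f)=f(0)$ gives $|v_{n,0}(f)|\le\|f\|$ trivially, while for $k\ge1$ the representation $v_{n,k}(f)=(n+1)\int_0^{\infty}P_{n+2,k-1}(t)f(t)\,dt$ yields $|v_{n,k}(f)|\le\|f\|\,(n+1)\int_0^{\infty}P_{n+2,k-1}(t)\,dt$. A direct Beta-integral computation shows $\int_0^{\infty}P_{m,j}(t)\,dt=\frac{1}{m-1}$, hence $(n+1)\int_0^{\infty}P_{n+2,k-1}(t)\,dt=1$ and $|v_{n,k}(f)|\le\|f\|$. Equivalently, this is the reproduction identity $v_{n,k}(1)=1$.

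The heart of the argument is the pointwise estimate. Using Proposition~\ref{pr:2.3}\,(a) I would rewrite $\wP_{n,k}(x)=P_{n,k}(x)-\tfrac1n\wD P_{n,k}(x)=\big(1-\tfrac1n T_{n,k}(x)\big)P_{n,k}(x)$. Since $P_{n,k}(x)\ge0$, Cauchy--Schwarz gives
$$
  \sum_{k=0}^{\infty}\Big|1-\tfrac1n T_{n,k}(x)\Big|P_{n,k}(x)
  \le\Big(\sum_{k=0}^{\infty}\big(1-\tfrac1n T_{n,k}(x)\big)^2 P_{n,k}(x)\Big)^{1/2}\Big(\sum_{k=0}^{\infty}P_{n,k}(x)\Big)^{1/2}.
$$
The second factor equals $1$ by $\mu_{n,0}(x)=1$ (Proposition~\ref{pr:2.1}\,(b)), and the first factor is exactly $\Phi(1)^{1/2}$ in the notation of Proposition~\ref{pr:2.3}\,(b) with $\alpha=1$, namely $\Phi(1)=3+\tfrac2n$. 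Thus $\sum_{k=0}^{\infty}|\wP_{n,k}(x)|\le\sqrt{3+\tfrac2n}$.

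Finally, the restriction $n\ge2$ is used precisely here: $3+\tfrac2n\le4$ if and only if $n\ge2$, so $\sqrt{3+\tfrac2n}\le2$, which closes the argument. I do not expect a genuine obstacle; the only point requiring care is the absolute value inside the sum, which forces the use of Cauchy--Schwarz, since the operator is \emph{not} positive and one cannot simply sum the $\wP_{n,k}$ and invoke $\sum_k P_{n,k}=1$. The clean availability of $\Phi(1)$ from Proposition~\ref{pr:2.3}\,(b) is exactly what makes this step immediate.
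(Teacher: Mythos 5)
Your proposal is correct and follows essentially the same route as the paper: rewrite $\wP_{n,k}=(1-\tfrac1n T_{n,k})P_{n,k}$, bound $|v_{n,k}(f)|\le\|f\|$, apply Cauchy--Schwarz, and invoke $\Phi(1)=3+\tfrac2n$ together with $\sum_k P_{n,k}=1$. The only difference is that you spell out the Beta-integral justification of $|v_{n,k}(f)|\le\|f\|$, which the paper simply asserts.
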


\begin{proof}
From \eqref{eq:1.5} and Proposition~\ref{pr:2.3}\,(a) we have
$$
  \wP_{n,k}(x) = P_{n,k}(x) - \frac{1}{n}\,\wD P_{n,k}(x) = \Big(1-\frac{1}{n}\,T_{n,k}(x)\Big)P_{n,k}(x).
$$
Taking into account that $|v_{n,k}(f)|\le \|f\| $ we get for $x\in[0,\infty )$,
\begin{align*}
  \big|\wV_n(f,x)\big|
  & = \left| \sum_{k=0}^\infty v_{n,k}(f)\wP_{n,k}(x) \right|
    \le \sum_{k=0}^\infty  |v_{n,k}(f)|\,\big|\wP_{n,k}(x)\big| \\
  & \le \|f\| \sum_{k=0}^\infty  \big|\wP_{n,k}(x)\big|
    = \|f\| \sum_{k=0}^\infty  \Big|1-\frac{1}{n}\,T_{n,k}(x)\Big| P_{n,k}(x).
\end{align*}
Then, the Cauchy inequality yields
$$
  \big|\wV_n(f,x)\big| \le
  \|f\| \sqrt{\sum_{k=0}^\infty  \Big(1-\frac{1}{n}\,T_{n,k}(x)\Big)^2 P_{n,k}(x)}\,\sqrt{\sum_{k=0}^\infty  P_{n,k}(x)}.
$$
Since $\sum_{k=0}^\infty  P_{n,k}(x)=1$ identically, by Proposition~\ref{pr:2.3}\,(b) with $\alpha=1$ we find
$$
  \big|\wV_n(f,x)\big| \le \sqrt{3+\frac{2}{n}}\,\|f\| < 2\,\|f\|, \qquad x\in [0,\infty).
$$
Hence, inequality \eqref{eq:3.1} follows.
\end{proof}

\smallskip
Now we prove a Jackson type inequality.

\begin{lemma} \label{le:3.2}
  If $\,n\ge 2$, $n\in \mathbb{N}$, $f\in W^2_0(\psi)$ and $\wD f\in W^2(\psi)$, then
  $$
    \big\|\wV_n f - f\big\| \le \frac{1}{n^2}\|\wD^2 f\|.
  $$
\end{lemma}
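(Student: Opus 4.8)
The plan is to reduce the statement to the first–order estimate \eqref{eq:2.8} for $V_n$ by exploiting the factorisation in Proposition~\ref{pr:2.2}\,(b), and then to exhibit the cancellation of the leading error term that the modification \eqref{eq:1.5} is designed to produce. Writing $g:=f-\frac1n\wD f$, Proposition~\ref{pr:2.2}\,(b) gives $\wV_n f=V_n g$, so that $\wV_n f-f=(V_n g-g)-\frac1n\wD f$. Using $\wD V_n=V_n\wD$ from Proposition~\ref{pr:2.2}\,(a) together with linearity, I would rearrange this into
\begin{equation*}
  \wV_n f-f=\Big[(V_n f-f)-\tfrac1n\wD f\Big]-\tfrac1n\,(V_n-I)\wD f .
\end{equation*}
The second bracket is immediately harmless: applying \eqref{eq:2.8} with $\wD f$ in place of $f$ gives $\big\|\frac1n(V_n-I)\wD f\big\|\le\frac1{n^2}\|\wD^2 f\|$, which is already of the desired order.

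Everything therefore rests on the first bracket $E:=(V_n f-f)-\frac1n\wD f$, the first–order Taylor remainder of $V_n$. This is exactly where the cancellation must be captured: the bound \eqref{eq:2.8} only yields $\|E\|\le\frac2n\|\wD f\|$, which is of the wrong order, because the triangle inequality destroys the cancellation between $V_n f-f$ and $\frac1n\wD f$. To see why $E$ is genuinely of second order I would invoke the exact representation behind \eqref{eq:2.8}, namely $V_n f-f=\int_0^\infty G_n(\cdot,t)\,\wD f(t)\,dt$ with $G_n\ge 0$ and $\int_0^\infty G_n(\cdot,t)\,dt=\frac1n$, which underlies \cite[Lemma~3.2]{IvPa2011}. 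Since the representing kernel has total mass $\frac1n$, the averaging operator behind $V_n-I$ reproduces constants up to the factor $\frac1n$, so that
\begin{equation*}
  E(x)=\int_0^\infty G_n(x,t)\big(\wD f(t)-\wD f(x)\big)\,dt ,
\end{equation*}
which visibly vanishes to first order and must be estimated by a second difference of $\wD f$, i.e.\ ultimately by $\wD^2 f=\wD(\wD f)$.

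The hard part is turning this heuristic into the \emph{sharp} bound with constant $1$. I would establish a second–order (Peano–kernel) representation of $E$ in terms of $\wD^2 f$ by iterating the representation of \eqref{eq:2.8} once more, thereby writing $\wV_n f-f$ as a single integral against $\wD^2 f$ with a kernel whose total mass computes to exactly $\frac1{n^2}$; an $L_\infty$ bound then yields the constant $1$ directly, in the same spirit in which Lemma~\ref{le:3.1} was obtained from Proposition~\ref{pr:2.3}\,(b). An alternative, and probably more robust, route is the fully explicit one paralleling Lemma~\ref{le:3.1}: expand $\wV_n f(x)-f(x)=\sum_{k}\wP_{n,k}(x)\big(v_{n,k}(f)-f(x)\big)$, use $\sum_k\wP_{n,k}\equiv1$ (which follows from Proposition~\ref{pr:2.3}\,(a) and $\wD\,1=0$), Taylor–expand $f$ inside $v_{n,k}(f)=(n+1)\int_0^\infty P_{n+2,k-1}(t)f(t)\,dt$, and check that all contributions below the $\wD^2 f$ term cancel, the surviving moment sums being controlled by the estimates of Proposition~\ref{pr:2.5}. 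The delicate points I would watch are the boundary behaviour as $x\to0^+$ (only $f\in W^2_0(\psi)$ with $\wD f\in W^2(\psi)$ is assumed, so $\wD^2 f$ need not vanish at $0$, which must be accommodated when invoking \eqref{eq:2.8} for $\wD f$) and the precise bookkeeping of constants, so that the two contributions combine to exactly $\frac1{n^2}\|\wD^2 f\|$ rather than a larger multiple.
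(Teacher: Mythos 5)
Your proposal identifies the right phenomenon (a second--order cancellation that the modification is designed to produce), but it does not constitute a proof: the central estimate is left as a plan. You reduce everything to bounding $E=(V_nf-f)-\frac1n\wD f$ by $\mathrm{const}\cdot n^{-2}\|\wD^2f\|$, and then describe two possible strategies (a Peano--kernel representation whose mass ``computes to exactly $\frac1{n^2}$'', or an explicit Taylor/moment computation) without carrying either out. The integral representation $V_nf-f=\int_0^\infty G_n(\cdot,t)\,\wD f(t)\,dt$ with $G_n\ge0$ of mass $\frac1n$ is asserted, not established, and even granting it, the claim that $E$ ``visibly vanishes to first order'' does not by itself produce a quantitative bound in terms of $\wD^2f$. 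The paper's actual mechanism is entirely different and fully explicit: the telescoping identity $V_kf-V_{k+1}f=\frac1{k(k+1)}\wD V_kf$ from \cite[Lemma~2.2]{IvPa2011} combined with Proposition~\ref{pr:2.2}\,(a)--(b) gives $\wV_kf-\wV_{k+1}f=-\frac1{k(k+1)^2}\wD V_k\wD f$, whence the exact series $\wV_nf-f=-\sum_{k\ge n}\frac{1}{k(k+1)^2}V_k\wD^2f$; the bound then follows from $\|\wD V_k\wD f\|\le\|\wD^2f\|$ and $\lambda(n)\le n^{-2}$. That identity, or something equivalent, is the missing ingredient in your argument.

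There is also a quantitative obstruction to your specific decomposition. If you bound the two brackets \emph{separately} by the triangle inequality, you cannot reach the constant $1$: the exact series above shows that $-\frac1n(V_n-I)\wD f\approx-\frac1{n^2}\wD^2f$ while $E\approx+\frac1{2n^2}\wD^2f$, i.e.\ the two brackets have opposite signs and partially cancel (their sum is $-\lambda(n)\wD^2f+O(n^{-3})$ with $\lambda(n)\approx\frac1{2n^2}$, cf.\ Lemma~\ref{le:4.1}). Adding the separate bounds therefore yields at best roughly $\frac{3}{2n^2}\|\wD^2f\|$, and with the tools available in the paper ($\|E\|\le\sum_{k\ge n}\frac1{k^2(k+1)}\|\wD^2f\|\le\frac1{n^2}\|\wD^2f\|$) you would get $\frac{2}{n^2}\|\wD^2f\|$. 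So your route, even if completed, proves a weaker inequality; to recover the stated constant you must keep the two contributions together and exploit the cancellation, which is precisely what the paper's single telescoped series does.
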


\begin{proof}
According to \cite[Lemma 2.2]{IvPa2011} we have
$$
  V_k f - V_{k+1} f = \frac{1}{k(k+1)}\,\wD V_k f.
$$
Combining the latter with Proposition~\ref{pr:2.2}\,(a)--(b) we obtain
\begin{align*}
  \wV_k f - \wV_{k+1} f
    & = V_k f - \frac{1}{k}\,\wD V_k f - V_{k+1} f + \frac{1}{k+1}\,\wD V_{k+1} f \\
  % & = V_k f - V_{k+1} f + \frac{1}{k+1}\,\wD V_{k+1} f - \frac{1}{k}\,\wD V_k f \\
    & = \Big(\frac{1}{k}-\frac{1}{k+1}\Big)\wD V_k f + \frac{1}{k+1}\,\wD V_{k+1} f - \frac{1}{k}\,\wD V_k f \\
    & = - \frac{1}{k+1}\big(\wD V_k f - \wD V_{k+1} f\big) \\
    & = - \frac{1}{k+1}\big(V_k \wD f - V_{k+1} \wD f\big) \\
    & = - \frac{1}{k+1}\cdot\frac{1}{k(k+1)}\, \wD V_k \wD f,
\end{align*}
i.e.
$$
  \wV_k f - \wV_{k+1} f = - \frac{1}{k(k+1)^2}\,\wD V_k \wD f.
$$
Therefore for every $s>n$ we have
\begin{gather*}
 \wV_n f - \wV_s f = \sum_{k=n}^{s-1} \big(\wV_k f - \wV_{k+1} f\big)
                   = - \sum_{k=n}^{s-1} \frac{1}{k(k+1)^2}\,\wD V_k \wD f.
\end{gather*}
Letting $s\rightarrow\infty$ by Proposition~\ref{pr:2.2}\,(a) and (f) we obtain
\begin{equation} \label{eq:3.2}
  \wV_n f - f = - \sum_{k=n}^{\infty} \frac{1}{k(k+1)^2}\,\wD V_k \wD f
              = - \sum_{k=n}^{\infty} \frac{1}{k(k+1)^2}\,V_k \wD^2 f.
\end{equation}
Then Proposition~\ref{pr:2.2}\,(g) yields
$$
  \|\wV_n f - f\| \le \sum_{k=n}^{\infty} \frac{1}{k(k+1)^2}\,\big\|\wD V_k \wD f\big\|
                  \le \sum_{k=n}^{\infty} \frac{1}{k(k+1)^2}\,\big\|\wD^2 f\big\|
$$
and from \eqref{eq:2.14} we conclude
$$
  \big\|\wV_n f - f\big\| \le \frac{1}{n^2}\,\big\|\wD^2 f\big\|.
$$
\end{proof}

Based on both lemmas above we prove a direct result for the approximation rate of functions
$f\in C[0,\infty)$ by the operators \eqref{eq:1.4} in means of the K-functional defined in
\eqref{eq:1.6}.

\smallskip
\begin{proof}[Proof of Theorem~\ref{th:1.1}]
Let $g$ be arbitrary function such that $g\in W^2_0(\psi)$ and $\wD g\in W^2(\psi)$. Then by
Lemma~\ref{le:3.1} and Lemma~\ref{le:3.2} we have
\begin{align*}
  \big\|\wV_n f-f\big\|
    & \le \big\|\wV_n f-\wV_n g\big\| + \big\|\wV_n g-g\big\| + \|g-f\| \\
    & \le 3\|f-g\| + \frac{1}{n^2}\,\big\|\wD^2 g\big\| \\
    & \le 3\Big(\|f-g\|+\frac{1}{n^2}\,\big\|\wD^2 g\big\|\Big).
\end{align*}
Taking infimum over all functions $g\in W^2_0(\psi)$ with $\wD g\in W^2(\psi)$ we obtain
$$
  \big\|\wV_n f-f\big\| \le 3\,K\Big(f,\frac{1}{n}\Big).
$$
\end{proof}

%====================================================================================================

\bigskip
\section{A Strong Converse Inequality}

First, we will prove a Voronovskaya type result for the operator $\wV_n$.

\begin{lemma} \label{le:4.1}
  If $\,n\ge 2$, $n\in \mathbb{N}$, $\ds \,\lambda(n) = \sum_{k=n}^{\infty} \frac{1}{k(k+1)^2}$,
  $\ds \,\theta(n) = \sum_{k=n}^{\infty} \frac{1}{k^2(k+1)^2}$, and $f\in C[0,\infty)$ is such that
  $f\in W^2_0(\psi)$, $\wD^2 f\in W^2(\psi)$, then
  $$
    \big\|\wV_n f - f + \lambda(n)\wD^2 f\big\| \le \theta(n)\,\big\|\wD^3 f\big\|.
  $$
\end{lemma}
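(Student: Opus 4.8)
The plan is to build directly on the exact series representation already established in the proof of Lemma~\ref{le:3.2}. Formula~\eqref{eq:3.2} gives
\[
  \wV_n f - f = - \sum_{k=n}^{\infty} \frac{1}{k(k+1)^2}\,V_k \wD^2 f,
\]
and the crucial structural remark is that the weight $\lambda(n)$ appearing in the statement is exactly the tail $\sum_{k=n}^{\infty}\frac{1}{k(k+1)^2}$ of this very series. Consequently, adding the term $\lambda(n)\wD^2 f$ does not perturb the expression in an uncontrolled way; instead it combines with each summand to produce the difference between $V_k\wD^2 f$ and $\wD^2 f$.

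Concretely, I would rewrite
\[
  \wV_n f - f + \lambda(n)\,\wD^2 f
    = \sum_{k=n}^{\infty} \frac{1}{k(k+1)^2}\,\big(\wD^2 f - V_k \wD^2 f\big),
\]
so that the whole estimate is reduced to bounding $\|V_k\wD^2 f - \wD^2 f\|$ term by term. For this the only tool needed is the first inequality in \eqref{eq:2.8}, namely $\|V_k g - g\| \le \frac{1}{k}\|\wD g\|$. Applying it with $g = \wD^2 f$, which lies in $W^2(\psi)$ by hypothesis so that $\wD^3 f \in L_\infty[0,\infty)$, gives $\|V_k\wD^2 f - \wD^2 f\| \le \frac{1}{k}\|\wD^3 f\|$.

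Summing these bounds and recognising that $\sum_{k=n}^{\infty}\frac{1}{k(k+1)^2}\cdot\frac{1}{k} = \sum_{k=n}^{\infty}\frac{1}{k^2(k+1)^2} = \theta(n)$ yields
\[
  \big\|\wV_n f - f + \lambda(n)\,\wD^2 f\big\|
    \le \theta(n)\,\big\|\wD^3 f\big\|,
\]
which is precisely the claim. I do not anticipate any serious analytic difficulty here: once \eqref{eq:3.2} is in hand, the result is essentially a one-line consequence of it together with \eqref{eq:2.8}. The only point deserving care is the bookkeeping around the function spaces --- checking that the hypotheses $f\in W^2_0(\psi)$ and $\wD^2 f\in W^2(\psi)$ are strong enough both to justify reusing the representation \eqref{eq:3.2} (originally derived for $f\in W^2_0(\psi)$ with $\wD f\in W^2(\psi)$) and to apply \eqref{eq:2.8} to $g=\wD^2 f$, and confirming that absolute convergence of the series legitimises the term-by-term norm estimate.
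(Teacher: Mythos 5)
Your proposal is correct and follows essentially the same route as the paper: both start from the series representation \eqref{eq:3.2}, absorb $\lambda(n)\wD^2 f$ into the sum to get $\sum_{k=n}^{\infty}\frac{1}{k(k+1)^2}\big(\wD^2 f - V_k\wD^2 f\big)$, and then apply the first inequality of \eqref{eq:2.8} with $\wD^2 f$ in place of $f$ to obtain the bound $\theta(n)\|\wD^3 f\|$. No substantive differences.
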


\begin{proof}
We have from \eqref{eq:3.2} that
$$
  \wV_n f - f + \lambda(n)\wD^2 f
    = - \sum_{k=n}^{\infty} \frac{V_k\wD^2 f}{k(k+1)^2} + \sum_{k=n}^{\infty} \frac{\wD^2 f}{k(k+1)^2}
    = \sum_{k=n}^{\infty} \frac{\wD^2 f - V_k \wD^2 f}{k(k+1)^2}.
$$
Then
$$
  \big\|\wV_n f - f + \lambda(n)\wD^2 f\big\|
  \le \sum_{k=n}^{\infty} \frac{1}{k(k+1)^2}\,\big\|\wD^2 f - V_k \wD^2 f\big\|.
$$
Using \eqref{eq:2.8} with $\wD^2 f$ replacing $f$ we obtain
$$
  \big\|\wV_n f - f + \lambda(n)\wD^2 f\big\|
    \le \sum_{k=n}^{\infty} \frac{1}{k(k+1)^2}\cdot\frac{1}{k}\,\big\|\wD \wD^2 f\big\|
    = \theta(n)\,\big\|\wD^3 f\big\|.
$$
\end{proof}

We need the next inequality of Bernstein type.

\begin{lemma} \label{le:4.2}
  Let $n\in\mathbb{N}$ and $f\in C[0,\infty )$. Then for $n\ge 17$ the inequality
  $$
    \|\wD \wV_n f\| \le \wC\, n\|f\|
  $$
  holds true, where $\wC=6+4\sqrt{3}$\,.
\end{lemma}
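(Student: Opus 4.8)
The plan is to remove the dependence on $f$ at the very start and reduce everything to a pointwise estimate of the basis functions. Since $|v_{n,k}(f)|\le\|f\|$, term-by-term differentiation of the (locally uniformly convergent) series \eqref{eq:1.4} gives $\wD\wV_n f=\sum_{k=0}^\infty v_{n,k}(f)\,\wD\wP_{n,k}$, whence $|\wD\wV_n f(x)|\le\|f\|\sum_{k=0}^\infty|\wD\wP_{n,k}(x)|$. Thus it suffices to show $\sum_{k=0}^\infty|\wD\wP_{n,k}(x)|\le\wC n$ for every $x\in[0,\infty)$. To compute $\wD\wP_{n,k}$ I would start from $\wP_{n,k}=P_{n,k}-\frac1n\wD P_{n,k}=P_{n,k}-\frac1n T_{n,k}P_{n,k}$ (Proposition~\ref{pr:2.3}(a)), apply $\wD$ and expand by the product rule, and then collapse the result using the two pointwise identities $\psi P_{n,k}''=T_{n,k}P_{n,k}$ (Proposition~\ref{pr:2.3}(a)) and $P_{n,k}'=\frac{k-nx}{\psi}\,P_{n,k}$, the latter following at once from \eqref{eq:2.1} and \eqref{eq:2.6}. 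The second identity is decisive: it turns the cross term $2\psi T_{n,k}'P_{n,k}'$ into $2T_{n,k}'(k-nx)P_{n,k}$, so that $\wD\wP_{n,k}$ becomes a single multiple of $P_{n,k}$, namely $\wD\wP_{n,k}=G_{n,k}P_{n,k}$ with $G_{n,k}=T_{n,k}-\frac1n\big[\psi T_{n,k}''+2T_{n,k}'(k-nx)+T_{n,k}^2\big]$.

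With this in hand I would estimate $\sum_k|G_{n,k}|P_{n,k}$ by Cauchy--Schwarz against the probability weights $P_{n,k}$ (which sum to $1$), splitting $G_{n,k}$ into its three natural groups. For the algebraic group I would not square $T_{n,k}(1-\frac1n T_{n,k})$ directly but split it, writing $\sum_k|T_{n,k}|\,|1-\frac1n T_{n,k}|P_{n,k}\le(\sum_k T_{n,k}^2P_{n,k})^{1/2}(\sum_k(1-\frac1n T_{n,k})^2P_{n,k})^{1/2}$; both factors are evaluated in closed form by Proposition~\ref{pr:2.3}(b), giving $\sqrt{n^2(2+\frac2n)}\cdot\sqrt{3+\frac2n}$. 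The second group contributes $\frac1n\sum_k|\psi T_{n,k}''|P_{n,k}\le\frac1n(\sum_k(\psi T_{n,k}'')^2P_{n,k})^{1/2}$, and the third, split into $T_{n,k}'$ and $(k-nx)$, contributes $\frac2n(\sum_k(T_{n,k}')^2P_{n,k})^{1/2}(\sum_k(k-nx)^2P_{n,k})^{1/2}$, where $\sum_k(k-nx)^2P_{n,k}=n^2\mu_{n,2}=n\psi$ by Proposition~\ref{pr:2.1}(b).

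The remaining work is to evaluate the two second moments $\sum_k(\psi T_{n,k}'')^2P_{n,k}$ and $\sum_k(T_{n,k}')^2P_{n,k}$. By \eqref{eq:2.9}--\eqref{eq:2.11} both $\psi T_{n,k}''$ and $T_{n,k}'$ are, after expansion, polynomials of degree at most two in $\frac kn-x$ with explicit $x$-dependent coefficients, so these sums reduce to the central moments $\mu_{n,j}$, $j\le4$, of Proposition~\ref{pr:2.1}(b). The leading behaviour is $\sum_k(\psi T_{n,k}'')^2P_{n,k}=4n^4+O(n^3)$ and $\sum_k(T_{n,k}')^2P_{n,k}=\frac{4n^3}{\psi}+\cdots$; in the third group the factor $\frac1\psi$ is exactly matched by the $\sqrt{n\psi}$ coming from $\sum_k(k-nx)^2P_{n,k}$, so that group is $O(n)$ uniformly in $x$ and, likewise, the second group is $\frac1n\sqrt{4n^4+O(n^3)}=O(n)$. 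Collecting the three contributions and using $n\ge17$ to absorb the $O(n^{-1})$ corrections is then expected to yield $\sum_k|\wD\wP_{n,k}(x)|\le(6+4\sqrt3)\,n$.

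I expect the genuine difficulty to lie entirely in making these estimates uniform in $x$, in particular as $x\to0^+$, where $\psi\to0$ and the individual coefficients of $\psi T_{n,k}''$ and $T_{n,k}'$ (viewed as polynomials in $\frac kn-x$) blow up while the weighted sums stay of the claimed size; one must keep the exact central-moment identities rather than pass to upper estimates too early. The decisive structural point is the identity $P_{n,k}'=\frac{k-nx}{\psi}P_{n,k}$, which makes $\wD\wP_{n,k}$ a clean multiple of $P_{n,k}$: the cruder bound $|P_{n,k}'|\le n(P_{n+1,k-1}+P_{n+1,k})$ obtained from \eqref{eq:2.6} destroys the cancellation between consecutive basis functions and produces a contribution growing like $\sqrt\psi$, i.e.\ unbounded in $x$, so it must be avoided.
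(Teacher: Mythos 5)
Your reduction to $\sum_k|\wD\wP_{n,k}(x)|\le\wC n$ and the identity $\wD\wP_{n,k}=G_{n,k}P_{n,k}$ with $G_{n,k}=T_{n,k}-\frac1n\big[\psi T_{n,k}''+2(k-nx)T_{n,k}'+T_{n,k}^2\big]$ are both correct (the logarithmic derivative indeed gives $\psi P_{n,k}'=(k-nx)P_{n,k}$), and your treatment of the algebraic group via $\Phi(0)$ and $\Phi(1)$ is exactly the paper's estimate of its term $c_n$. The gap is in the other two groups: the Cauchy--Schwarz step you propose for them is not merely ``lossy if one passes to upper estimates too early''--- it genuinely diverges near $x=0$. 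Using the paper's own closed form, $\sum_k(\psi T_{n,k}'')^2P_{n,k}=4n(n+1)(n+2)\big[n+3+4g_1(x)+\tfrac{2}{n+2}g_2(x)\big]$ with $g_2(x)=\frac{(1+x)^2}{x^2}-\frac{x^2}{(1+x)^2}\sim x^{-2}$ as $x\to0^+$ (concretely, the single term $k=2$ already contributes about $8n(n+1)/x^2$), so $\frac1n\big(\sum_k(\psi T_{n,k}'')^2P_{n,k}\big)^{1/2}$ is unbounded on $\big(0,\tfrac1n\big)$ even though the quantity you actually need, $\frac1n\sum_k|\psi T_{n,k}''|P_{n,k}$, stays $O(n)$ there. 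The cross term behaves the same way: $\sum_k(T_{n,k}')^2P_{n,k}\gtrsim n^2/x^2$ near $0$, and multiplying by $\sqrt{n\psi}\sim\sqrt{nx}$ still leaves a factor $x^{-1/2}$. The structural reason is that the large values of $|T_{n,k}'|$, $|T_{n,k}''|$ (at small $k$) and the bulk of the mass of $P_{n,k}$ sit at different indices, a mismatch Cauchy--Schwarz cannot exploit.

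The paper supplies two ideas your plan lacks. First, instead of pairing $T_{n,k}'$ with $(k-nx)$, it uses Proposition~\ref{pr:2.4}(b) to rewrite $-\frac{\psi}{n}T_{n,k}'P_{n,k}'$ as $T_{n+1,k-1}P_{n+1,k-1}+T_{n+1,k}P_{n+1,k}+\frac{\psi}{n}T_{n,k}''P_{n,k}$; the first two summands are bounded uniformly in $x$ by $\Phi(0)$ at level $n+1$ (giving $b_n\le2\sqrt3\,n$). Second, for the remaining group $a_n=\frac{\psi}{n}\sum_k|T_{n,k}''|P_{n,k}$ it splits the half-line at $x=\tfrac1n$: on $\big[\tfrac1n,\infty\big)$ Cauchy--Schwarz does work (there $g_1(1/n),g_2(1/n)=O(n^2)$ and $a_n^2<36n^2$), while on $\big(0,\tfrac1n\big]$ one verifies $T_{n,k}''>0$ for all $k\ge2$, removes the absolute values up to the two explicit terms $k=0,1$, and evaluates the signed sum exactly via the telescoping identity of Proposition~\ref{pr:2.4}(c). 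Your closing remark correctly senses the danger at $x\to0^+$, but the proposed remedy (keeping exact central-moment identities) cannot repair a Cauchy--Schwarz step that is already divergent there; some version of the positivity/telescoping argument on $\big(0,\tfrac1n\big]$ is indispensable, so as written the proof cannot be completed.
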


\begin{proof}
Since
$$
  \big|\wD\wV_n(f,x)\big| \le \sum_{k=0}^\infty |v_{n,k}(f)|\,\big|\wD\wP_{n,k}(x)\big|
                              \le \|f\| \sum_{k=0}^\infty \big|\wD\wP_{n,k}(x)\big|,
$$
it is sufficient to prove that
$$
  \sum_{k=0}^\infty \big|\wD \wP_{n,k}(x)\big| = \sum_{k=0}^\infty \big|\psi(x)\wP_{n,k}''(x)\big|
  \le (6+4\sqrt{3})\,n, \qquad n\ge 17.
$$

Remind that by \eqref{eq:2.12} we have
$$
  \wD P_{n,k}(x) = T_{n,k}(x)P_{n,k}(x),
$$
hence
\begin{align*}
  \wP_{n,k}(x)   & = P_{n,k}(x) - \frac{1}{n}\,\wD P_{n,k}(x)
                   = \Big(1 - \frac{1}{n}\,T_{n,k}(x)\!\Big)P_{n,k}(x), \\
  \wP_{n,k}''(x) & = - \frac{1}{n}\,T_{n,k}''(x) P_{n,k}(x) - \frac{2}{n}\,T_{n,k}'(x) P_{n,k}'(x)
                     + \Big(1 - \frac{1}{n}\,T_{n,k}(x)\!\Big) P_{n,k}''(x).
\end{align*}
Then,
\begin{align*}
  \wD \wP_{n,k}(x)
  & = - \frac{\psi(x)}{n}\,T_{n,k}''(x) P_{n,k}(x) - \frac{2\psi(x)}{n}\,T_{n,k}'(x) P_{n,k}'(x)
                       + \Big(1 - \frac{1}{n}\,T_{n,k}(x)\!\Big) \psi(x) P_{n,k}''(x) \\
  & = - \frac{\psi(x)}{n}\,T_{n,k}''(x) P_{n,k}(x) - \frac{2\psi(x)}{n}\,T_{n,k}'(x) P_{n,k}'(x)
                       + \Big(1 - \frac{1}{n}\,T_{n,k}(x)\!\Big) T_{n,k}(x) P_{n,k}(x).
\end{align*}

By using Proposition~\ref{pr:2.4}\,(a)--(c) we obtain
$$
  - \frac{\psi(x)}{n}\,T_{n,k}'(x) P_{n,k}'(x) =
  T_{n+1,k-1}(x)P_{n+1,k-1}(x) + T_{n+1,k}(x)P_{n+1,k}(x) + \frac{\psi(x)}{n}T_{n,k}''(x) P_{n,k}(x)
$$
and then
\begin{align*}
  \wD \wP_{n,k}(x) & = \frac{\psi(x)}{n}\,T_{n,k}''(x) P_{n,k}(x)
                       + 2\big[T_{n+1,k-1}(x)P_{n+1,k-1}(x) +T_{n+1,k}(x)P_{n+1,k}(x)\big] \\
                   & \quad + \Big(1 - \frac{1}{n}\,T_{n,k}(x)\!\Big) T_{n,k}(x) P_{n,k}(x).
\end{align*}

Therefore
\begin{align*}
  \sum_{k=0}^{\infty}\big|\wD \wP_{n,k}(x)\big| & \le a_n(x) + b_n(x) + c_n(x), \\
  \intertext{where}
  a_n(x) & = \frac{\psi(x)}{n} \sum_{k=0}^{\infty} \big|T_{n,k}''(x)\big| P_{n,k}(x), \\
  b_n(x) & = 2 \sum_{k=0}^{\infty} |T_{n+1,k-1}(x) | P_{n+1,k-1}(x) + 2 \sum_{k=0}^{\infty} |T_{n+1,k}(x)| P_{n+1,k}(x), \\
  c_n(x) & = \sum_{k=0}^{\infty} \Big|\Big(1 - \frac{1}{n}\,T_{n,k}(x)\!\Big) T_{n,k}(x)\Big| P_{n,k}(x).
\end{align*}

\smallskip
{\sl (i) \ Estimation of $a_n(x)$.}
We will estimate $a_n(x)$ separately in the intervals $\big(0,\frac1n\big]$ and
$\big[\frac1n,\infty\big)$.

Let $x\in\big(0,\frac1n\big]$. Then, from \eqref{eq:2.11}, $T_{n,0}''(x)<0$ and $T_{n,1}''(x)<0$.
Since $\big(\frac{1+x}{x}\big)^3$ is a decreasing function for $x\in \big(0,\frac1n\big]$ and
$\frac{(n+k)(n+k+1)}{k(k-1)}$ decreases for $k\ge 2$, we have
$$
  \Big(\frac{1+x}{x}\Big)^3 > (n+1)^3 > \frac{(n+1)(n+2)}{2} \ge \frac{(n+k)(n+k+1)}{k(k-1)}.
$$
Hence
$$
  T_{n,k}''(x) = 2\Big(\frac{k(k-1)}{x^3} - \frac{(n+k)(n+k+1)}{(1+x)^3}\Big) > 0, \qquad k\ge 2.
$$
Applying the latter inequalities, Proposition~\ref{pr:2.4}\,(c) and \eqref{eq:2.1} we obtain
\begin{align*}
  a_n(x) & = \frac{\psi(x)}{n} \sum_{k=0}^{\infty} |T_{n,k}''(x)| P_{n,k}(x) \\
         & = \frac{\psi(x)}{n} \bigg[\!\frac{n(n+1)}{(1+x)^3}\frac{1}{(1+x)^n}+\frac{(n+1)(n+2)}{(1+x)^3}\frac{nx}{(1+x)^{n+1}}+ \sum_{k=2}^{\infty} T_{n,k}''(x) P_{n,k}(x)\!\bigg] \\
         & = 2\frac{\psi(x)}{n}\bigg[\frac{n(n+1)}{(1+x)^{n+3}}+\frac{n(n+1)(n+2)x}{(1+x)^{n+4}}\bigg]
             + \frac{\psi(x)}{n} \sum_{k=0}^{\infty} T_{n,k}(x) P_{n,k}(x)\\
         & = 2(n+1)\,\frac{x}{(1+x)^{n+2}} + 2n(n+1)(n+2)\frac{x^2}{(1+x)^{n+3}} \\
         & \quad + 2\sum_{k=0}^{\infty} \big[(n+k-1)P_{n+1,k-2}(x)-(k+1)P_{n+1,k+1}(x)\big] \\
         & = 2(n+1)\,\frac{x}{(1+x)^{n+2}} + 2n(n+1)(n+2)\,\frac{x^2}{(1+x)^{n+3}} \\
         & \quad + 2 \sum_{k=0}^{\infty} (n+1)(1+x)P_{n+2,k-1}(x) - 2\sum_{k=0}^{\infty} (n+1)xP_{n+2,k}(x) \\
         & = 2(n+1)\frac{x}{(1+x)^{n+2}} + 2n(n+1)(n+2)\,\frac{x^2}{(1+x)^{n+3}} + 2(n+1)(1+x) - 2(n+1)x \\
         & < \frac{2(n+1)}{n} + \frac{2(n+1)(n+2)}{n^2} + 2(n+1).
\end{align*}

Therefore,
\begin{equation} \label{eq:4.1}
  a_n(x) < 3n, \qquad\text{for all} \ \ n\ge 8, \ \ x\in\big[0,\tfrac1n\big).
\end{equation}

Now, let $x\in\big[\frac1n,\infty\big)$. By the Cauchy inequality we have
\begin{align*}
  a_n(x) & = \frac{2 \psi(x)}{n} \sum_{k=0}^{\infty} \Big|\frac{k(k-1)}{x^3}-\frac{(n+k)(n+k+1)}{(1+x)^3}\Big| P_{n,k}(x) \\
         & \le \frac{2\psi(x)}{n} \sqrt{\sum_{k=0}^{\infty} \Big(\frac{k(k-1)}{x^3}-\frac{(n+k)(n+k+1)}{(1+x)^3}\Big)^{\!2} P_{n,k}(x)} \,\sqrt{\sum_{k=0}^{\infty} P_{n,k}(x)}
\end{align*}
and then
\begin{equation} \label{eq:4.2}
  a_n^2(x) = \frac{4\psi^2(x)}{n^2} \sum_{k=0}^{\infty} \Big(\frac{k(k-1)}{x^3}-\frac{(n+k)(n+k+1)}{(1+x)^3}\Big)^{\!2} P_{n,k}(x).
\end{equation}

From \eqref{eq:2.1} and \eqref{eq:2.2} we get
\begin{align} \label{eq:4.3}
  & \sum_{k=0}^{\infty} k^2(k-1)^2 P_{n,k}(x) = n(n+1)(n+2)\Big((n+3)x^4+4x^3+\frac{2x^2}{n+2}\Big), \\
  & \sum_{k=0}^{\infty} k(k-1)(n+k)(n+k+1)P_{n,k}(x) = n(n+1)(n+2)(n+3)x^2(1+x)^2 \label{eq:4.4}, \\
  & \sum_{k=0}^{\infty} (n+k)^2(n+k+1)^2P_{n,k}(x) = n(n+1)(n+2)(1+x)^2 \label{eq:4.5} \\[-5pt]
  & \hspace*{60mm} \times \Big(\!(n+3)(1+x)^2-4(1+x)+\frac{2}{n+2}\Big). \notag
\end{align}

After straightforward computations \eqref{eq:4.2}--\eqref{eq:4.5} yields
$$
  a^2_n(x) = \frac{4(n+1)(n+2)}{n}\bigg[n+3+4\bigg(\frac{(1+x)^2}{x}-\frac{x^2}{1+x}\bigg)+\frac2{n+2}\bigg(\frac{(1+x)^2}{x^2}-\frac{x^2}{(1+x)^2}\bigg)\bigg].
$$
Observe that functions $g_1(x)=\frac{(1+x)^2}{x}-\frac{x^2}{1+x}$ and
$g_2(x)=\frac{(1+x)^2}{x^2}-\frac{x^2}{(1+x)^2}$ are decreasing for $x\in (0,\infty)$. Hence, for
$x\in \big[\frac1n,\infty)$,
$$
  g_1(x) \le g_1\Big(\frac1n\Big) = \frac{(n+1)^2}{n} - \frac{1}{n(n+1)}, \qquad
  g_2(x) \le g_2\Big(\frac1n\Big) = (n+1)^2 + \frac1{(n+1)^2}.
$$
Then
\begin{align*}
  a^2_n(x) & \le \frac{4(n+1)(n+2)}{n}\bigg[n+3+\frac{4(n+1)^2}{n}+\frac{4(n+1)^2}{n+2}+\frac{2}{(n+1)^2(n+2)}-\frac4{n(n+1)}\bigg] \\
           & < \frac{4(n+1)(n+2)}{n}\bigg[n+3+\frac{4(n+1)^2}{n}+\frac{4(n+1)^2}{n+2}\bigg] \\
           & <36n^2,
\end{align*}
i.e.
\begin{equation} \label{eq:4.6}
  a_n(x) < 6n, \qquad\text{for all} \ \ n\ge 17, \ \ x\in\big[\tfrac1n,\infty\big).
\end{equation}

Now from \eqref{eq:4.1} and \eqref{eq:4.6} we get
\begin{equation} \label{eq:4.7}
  a_n(x) \le 6\,n, \qquad\text{for all} \ \ n\ge 17, \ \ x\in [0,\infty).
\end{equation}

\smallskip
{\sl (ii) \ Estimation of $b_n(x)$.}
\ By the Cauchy inequality and Proposition~\ref{pr:2.3}\,(b) with $\alpha=0$ we obtain
$$
  \sum_{k=0}^{\infty} \left|T_{n,k}(x)\right| P_{n,k}(x)
    \le \sqrt{\sum_{k=0}^{\infty} T_{n,k}^2(x) P_{n,k}(x)}\, \sqrt{\sum_{k=0}^{\infty} P_{n,k}(x)}
    = \sqrt{\Phi(0)n^2} = n\sqrt{2+\frac{2}{n}} < \sqrt{3}\,n,
$$
and hence for all $n\ge 2$,
\begin{equation} \label{eq:4.8}
  b_n(x) \le 2\sqrt{3}\,n, \qquad x\in [0,\infty).
\end{equation}

\smallskip
{\sl (iii) \ Estimation of $c_n(x)$.} \ Similarly, by applying the Cauchy inequality and
Proposition~\ref{pr:2.3}\,(b) with $\alpha=0$ and $\alpha=1$:
\begin{align*}
  c_n(x) & = \sum_{k=0}^{\infty} \Big|T_{n,k}(x)\Big(1 - \frac{1}{n}\,T_{n,k}(x)\Big)\Big| P_{n,k}(x) \\
         & \le \sqrt{\sum_{k=0}^{\infty} T_{n,k}^2(x) P_{n,k}(x)}\,
               \sqrt{\sum_{k=0}^{\infty} \Big(1-\frac{1}{n}\,T_{n,k}(x)\Big)^2 P_{n,k}(x)} \\
         & = \sqrt{\Phi(0)\,n^2}\,\sqrt{\Phi(1)} = n\sqrt{2+\frac{2}{n}}\,\sqrt{3+\frac{2}{n}}\,.
\end{align*}
Then for all $n\ge 2$,
\begin{equation} \label{eq:4.9}
  c_n(x) \le 2\sqrt{3}\,n, \qquad x\in [0,\infty).
\end{equation}

From \eqref{eq:4.7}--\eqref{eq:4.9} we obtain that for all $n\ge 17$,
$$
  \sum_{k=0}^n \big|\wD \wP_{n,k}(x)\big| \le a_n(x) + b_n(x) + c_n(x) \le \wC\,n, \qquad
  x\in [0,\infty),
$$
i.e.
$$
  \big\|\wD\wV_n f\big\| \le \wC n\|f\|, \qquad \wC = 6 + 4\sqrt{3}\,.
$$
\end{proof}

\smallskip
Now we are ready to prove a strong converse inequality of Type B according to Ditzian-Ivanov
classification, following their approach in~\cite{DiIv1993}.

\smallskip
\begin{proof}[Proof of Theorem~\ref{th:1.2}]
Let $n\in\mathbb{N}$, $n\ge 2$, $f\in C[0,\infty)$ and $\lambda(n)$, $\theta(n)$ be defined as in
Proposition~\ref{pr:2.5}. We apply the Voronovskaya type inequality in Lemma~\ref{le:4.1} for the
operator $\wV_{\ell}$ and $f$ replaced with $\wV_n^3 f$. Then
\begin{align*}
  \lambda(\ell) \big\|\wD^2 \wV_n^3 f\big\|
    & = \big\|\lambda(\ell)\wD^2 \wV_n^3 f\big\| \\
    & = \big\|\wV_{\ell} \wV_n^3 f - \wV_n^3 f +\lambda(\ell)\wD^2 \wV_n^3 f
        - \wV_{\ell} \wV_n^3 f + \wV_n^3 f\big\| \\
    & \le \big\|\wV_{\ell} \wV_n^3 f - \wV_n^3 f +\lambda(\ell)\wD^2 \wV_n^3 f\big\|
          + \big\|\wV_{\ell} \wV_n^3 f - \wV_n^3 f\big\| \\
    & \le \theta(\ell) \big\|\wD^3 \wV_n^3 f\big\| + \big\|\wV_n^3\big(\wV_{\ell} f - f\big)\big\|.
\end{align*}
Using Lemma~\ref{le:4.2} for the function $\wD^2 \wV_n^2 f$ and successively three times
Lemma~\ref{le:3.1} we get
\begin{align*}
  \lambda(\ell) \big\|\wD^2 \wV_n^3 f\big\|
    & \le \wC\,n\,\theta(\ell) \big\|\wD^2 \wV_n^2 f\big\| + 8\big\|\wV_{\ell}f - f\big\| \\
    & = \wC\,n\,\theta(\ell) \big\|\wD^2 \wV_n^2 (f - \wV_n f) + \wD^2 \wV_n^3 f\big\|
        + 8\big\|\wV_{\ell}f - f\big\| \\
    & \le \wC\,n\,\theta(\ell) \big\|\wD^2 \wV_n^2 (f - \wV_n f)\big\|
          + \wC\,n\,\theta(\ell) \big\|\wD^2 \wV_n^3 f\big\| + 8\big\|\wV_{\ell}f - f\big\|.
\end{align*}
Now, application of the Bernstein type inequality Lemma~\ref{le:4.2} twice for $f-\wV_n f$ yields
$$
  \lambda(\ell) \big\|\wD^2 \wV_n^3 f\big\|
  \le \wC^3 n^3\theta(\ell) \big\|f - \wV_n f\big\|
      + 8\,\big\|\wV_{\ell} - f\big\| + \wC\,n\,\theta(\ell) \big\|\wD^2 \wV_n^3 f\big\|.
$$
From inequalities \eqref{eq:2.14} and \eqref{eq:2.15} of Proposition~\ref{pr:2.5} we obtain
$$
  \frac{1}{2\ell^2} \big\|\wD^2 \wV_n^3 f\big\|
  \le \frac{4\wC^3 n^3}{9\ell^3} \big\|f - \wV_n f\big\|
      + 8\big\|\wV_{\ell} - f\big\| + \frac{4\wC n}{9\ell^3} \big\|\wD^2 \wV_n^3 f\big\|.
$$
Let us choose $\ell$ sufficiently large such that
$$
  \frac{4\wC n}{9\ell^3} \le \frac{1}{4\ell^2}, \qquad\text{i.e.}\qquad \ell \ge \frac{16\wC}{9}\,n.
$$
If we set $L=\frac{16\wC}{9}$, for all integers $\ell\ge Ln$ we have
\begin{align}
  \frac{1}{2\ell^2} \big\|\wD^2 \wV_n^3 f\big\|
    & \le \frac{4\wC^3 n^3}{9\ell^3} \big\|f - \wV_n f\big\|
        + 8\big\|\wV_{\ell} - f\big\| + \frac{1}{4\ell^2} \big\|\wD^2 \wV_n^3 f\big\|, \notag\\
  \frac{1}{4\ell^2} \big\|\wD^2 \wV_n^3 f\big\|
    & \le \frac{4\wC^3 n^3}{9\ell^3} \big\|f - \wV_n f\big\| + 8\big\|\wV_{\ell} - f\big\|, \notag\\
  \frac{1}{n^2} \big\|\wD^2 \wV_n^3 f\big\|
    & \le \wC^2\,\big\|f - \wV_n f\big\|
        + 32\frac{\ell^2}{n^2}\,\big\|\wV_{\ell} - f\big\|. \label{eq:4.10}
\end{align}
By Lemma~\ref{le:3.1},
\begin{align} \label{eq:4.11}
  \big\|f-\wV_n^3 f\big\|
   \le \big\|f - \wV_n f\big\| + \big\|\wV_n f - \wV_n^2 f\big\| + \big\|\wV_n^2 f - \wV_n^3 f\big\|
   \le 7\big\|f - \wV_n f\big\|.
\end{align}
Since $\wV_n^3 f\in W^2_0(\psi)$, from \eqref{eq:4.10} and \eqref{eq:4.11} it follows
\begin{align*}
  K\Big(f,\frac{1}{n^2}\Big)
    & = \inf \Big\{\|f-g\| + \frac{1}{n^2}\,\big\|\wD^2 g\big\|: \ g\in W^2_0(\psi),\,\wD g\in W^2(\psi)  \Big\} \\
    & \le \big\|f-\wV_n^3 f\| + \frac{1}{n^2}\,\big\|\wD^2 \wV_n^3 f\big\| \\
    & \le \big(7+\wC^2\big)\big\|\wV_n f-f\big\|
          + 32\,\frac{\ell^2}{n^2}\,\big\|\wV_{\ell} f-f\big\|.
\end{align*}

Hence, we obtain the following upper estimate of the K-functional,
$$
  K\Big(f,\frac{1}{n^2}\Big) \le
  C\,\frac{\ell^2}{n^2}\big(\big\|\wV_n f - f \big\| + \big\|\wV_{\ell} f - f) \big\| \big),
$$
for all $\ell\ge Ln$, where $C=7+\wC^2$ and $L=\frac{16\wC}{9}$, $\wC=6+4\sqrt{3}$\,.
\end{proof}

%====================================================================================================

\bigskip

%====================================================================================================

\end{document}